\documentclass{amsart}
\usepackage{amsmath,amssymb,enumerate,amsthm}
\topmargin 5mm
\evensidemargin 2mm
\oddsidemargin 2mm
\textwidth 156mm
\textheight 214mm
\allowdisplaybreaks[2]

\makeatletter
    
    \@addtoreset{equation}{section}
  \makeatother

\newcommand{\C}{{\mathbb C}}

\newcommand{\N}{{\mathbb N}}
\newcommand{\Z}{{\mathbb Z}}
\newcommand{\R}{{\mathbb R}}
\newcommand{\mO}{{\mathcal O}}
\newcommand{\mN}{{\mathcal N}}

\newcommand{\Ga}{\Gamma}

\newcommand{\la}{\lambda}
\newcommand{\al}{\alpha}
\newcommand{\be}{\beta}
\newcommand{\ga}{\gamma}
\newcommand{\de}{\delta}
\newcommand{\rh}{\rho}
\newcommand{\si}{\sigma}
\newcommand{\om}{\omega}
\newcommand{\vph}{\varphi}

\newcommand{\ggen}{( X^\omega )_{g\textrm{-gen}}}
\newcommand{\gfix}{( X^\omega )_g}
\newcommand{\Ggen}{( X^\omega )_{G\textrm{-gen}}}
\newcommand{\ggeno}{( X^\omega )_{g_2\textrm{-gen}}}
\newcommand{\ggent}{( X^\omega )_{g_1\textrm{-gen}}}
\newcommand{\hxfix}{( X^\omega )_{h(g, x)}}
\newcommand{\hxgen}{( X^\omega )_{h(g, x)\textrm{-gen}}}
\newcommand{\hugen}{( X^\omega )_{h(g, u)\textrm{-gen}}}
\newcommand{\fgen}{( X^\omega )_{f\textrm{-gen}}}
\newcommand{\ffix}{( X^\omega )_f}
\newcommand{\AL}{C$^*$-algebra \ }

\newtheorem{thm}{Theorem}[section]
\newtheorem{prop}[thm]{Proposition}
\newtheorem{cor}[thm]{Corollary}
\newtheorem{lem}[thm]{Lemma}

\theoremstyle{definition}
\newtheorem{defn}[thm]{Definition}
\newtheorem{exa}[thm]{Example}
\newtheorem{rem}[thm]{Remark}
\newtheorem{note}[thm]{Notation}
\newtheorem{mainthm}{}

\title{A von Neumann algebraic approach to self-similar group actions}

\author{Keisuke Yoshida}
\email{kskyhuni@math.sci.hokudai.ac.jp}
\address{Graduate School of Science, Hokkaido University, \mbox{060-0810} Sapporo, Japan}
%\thanks{}
%
%\subjclass{}
%\renewcommand{\subjclassname}{\textup{2010} Mathematics Subject Classification}
\keywords{self-similar action, Cuntz--Pimsner algebra, KMS state, von Neumann algebra}
\date{\today}

\begin{document}
\begin{abstract} 
We study some relations between self-similar group actions and operator algebras.
We see that $\mu(\Ggen)=1$ or $\mu(\Ggen)=0$ where $\mu$ denotes the Bernoulli measure and $\Ggen$ the set of $G$-generic point.
In the case $\mu(\Ggen)=1$,
we get a unique KMS state for the canonical gauge action on the Cuntz--Pimsner algebra constructed from a self-similar group action by Nekrashevych.
Moreover,
if $\mu(\Ggen)=1$,
there exists a unique tracial state on the gauge invariant subalgebra of the Cuntz--Pimsner algebra.
We also consider the GNS representation of the unique KMS state and compute the type of the associated von Neumann algebra.
\end{abstract}

\maketitle

\section{Introduction}
The relations between topological dynamical systems and C$^*$-dynamical systems are often studied in theory of operator algebras.
For instance,
in \cite{KW},
Cuntz--Pimsner algebras were constructed from complex dynamical systems on the Riemann sphere of rational functions by Kajiwara and Watatani.   
One of other (but closely related) examples was suggested in \cite{Nek, Nek2}.
In these papers,
Nekrashevych constructed Cuntz--Pimsner algebras from self-similar group actions.
Self-similar group actions are kinds of actions on the Cantor space $X^\om$ of unilateral infinite words over a finite alphabet $X$.
To mention the definition,
we prepare the unilateral shift $T_x$ for $x \in X$ on $X^\om$ given by $w \mapsto xw$.
A faithful action of a group $G$ on $X^\om$ is said to be \textit{self-similar} if for any $g \in G$ and $x \in X$ there exist $h \in G$ and $v \in X$ such that $gT_x = T_vh$.
If an action is self-similar,
then for any $g \in G$,
$n \in \N$ and $u \in X^n$
there exist $h \in G$ and $v \in X^n$ such that $gT_u = T_vh$.
These $h$ and $v$ are determined uniquely by $g$ and $u$ so we write $h=h(g, u)$ and $v=v(g, u)$.
Considering a rooted tree $X^*$ of finite words over $X$,
we see the reason why the term ``self-similar'' is used. 
From the above formula,
we can identify the action of $g \in G$ on the sub-tree $uX^*$ with the action of $h(g, u)$ on $X^*$.
This self-similar aspect naturally appears on some fractal dynamical systems. 
For example, 
we can construct self-similar groups from self-coverings of topological spaces.
In these examples,
self-similar groups appear as iterated monodromy groups of finite coverings.
Hence we can get self-similar group actions from rational functions.
Rational functions induce Cuntz--Pimsner algebras through two ways.
One is studied in \cite{KW} and another one is in \cite{Nek2} using iterated monodromy groups.
Two algebras are known to be isomorphic in some cases (see \cite{Nek2}) but it is not studied yet when von Neumann algebras constructed from two ways coincide. 
It is one of the purposes of this paper to consider this problem.   
In \cite{IK},
it was studied that the Lyubich measure gives a unique KMS state for the canonical gauge action on a Cuntz--Pimsner algebra of a rational function.
The types of associated von Neumann algebras were also determined in \cite{IK}. 
In this paper,
we will study self-similar group actions from the similar point of view to \cite{IK}.

To see details we again visit the Cuntz--Pimsner algebras of self-similar action.
It is the main idea in the construction of the Cuntz--Pimsner algebras from self-similar group actions to regard $T_x$ as isometry (we write $S_x$ respecting generating isometries in Cuntz algebras) and $g \in G$ as unitary.
The calculation rules should be $gS_x = S_vh$ where $v \in X$ and $h \in G$ are given by
self-similarity. 
We get several Cuntz--Pimsner algebras having the above aspect from self-similar group actions.   
The largest one is the universal C$^*$-algebra generated by $\{S_x\}_{x \in X}$ and $G$ satisfying the above calculation rules,
Cuntz relations
and all relations in $G$.
In this paper,
we use the symbol $\mO_{G_{\max}}$ for the largest Cuntz--Pimsner algebra from self-similar group action of $G$.
For the smallest one,
Nekrashevych has used a topological property called $G$-genericity.
A point $w \in X^\om$ is said to be \textit{$G$-generic} if for any $g \in G$ either $g(w) \neq w$ or $g$ fixes $w$ together with a open neighborhood of $w$.
The set of $G$-generic points is denoted by $\Ggen$ in this paper.
Nekrashevych has considered the norm on $\C G$ given by the permutation representation on $L^2$-functions on the $G$-orbit of a $G$-generic point.  
Actually,
this norm does not depend on the choice of $G$-generic points.
Let $A_{G_{\min}}$ be the completion of $\C G$ with respect to the norm. 
The smallest Cuntz--Pimsner algebra is the one from a Hilbert $A_{G_{\min}}$-bimodule. 
Other algebras from self-similar group actions have been also studied.
Using groupoid theories,
we can construct such algebras. 
Indeed, we can get groupoids from self-similar actions.
The full groupoid C$^*$-algebra is isomorphic to $\mO_{G_{\max}}$ for any self-similar action of $G$.
The reduced one might not be isomorphic to $\mO_{G_{\max}}$ or $\mO_{G_{\min}}$ in general but they are known to be isomorphic in some cases.
If $G$ is amenable,
the groupoid is also amenable (see \cite{Nek2} for Hausdorff groupoid case and \cite{EP} for more general case).
Therefore the full and reduced algebras are coincide.
In some cases with the assumption that $G$ is amenable,
it has been shown that $\mO_{G_{\max}}$ and $\mO_{G_{\min}}$ are isomorphic by proving the simplicity of the reduced groupoid C$^*$-algebra.
See \cite{CE, Nek2}.
It is one of a problems on the C$^*$-algebraic aspect that several Cuntz--Pimsner algebras from a self-similar group action might not be isomorphic especially in non-amenable case.
However,
von Neumann algebras coincides in more general cases.
We do not need the assumption the group is amenable.
This is a good point of the von Neumann algebraic aspect. 
  
This paper is organized as follows.
In the second section,
we will discuss relevance between $G$-generic points and the Bernoulli measure $\mu$ for the following argument on KMS states.
We will see that either $\mu(\Ggen) =1$ or $\mu(\Ggen)=0$ in the second section.
We assume that $\mu(\Ggen)=1$ in later sections.
Indeed,
the class of self-similar group actions with $\mu(\Ggen)=1$ is large enough.
It is easy to see that any free self-similar action satisfies $\mu(\Ggen)=1$.
Another important examples are contracting self-similar group actions.
In \cite{LR, Nek2},
contracting self-similar groups actions were often observed.
The assumption of an action being contracting allows us to look on only finite elements in the group in some sense.
A finite generated group called the Grigorchuk group is one of the famous examples of contracting self-similar groups.  
Actually,
every contracting self-similar group action satisfies $\mu(\Ggen)=1$.

In the third section,
we see that the uniqueness of the special states to get nice von Neumann algebras.
Assuming $\mu(\Ggen)=1$,
we observe that there exist unique KMS states on $\mO_{G_{\max}}$ and $\mO_{G_{\min}}$ for the canonical gauge action.
The existence and uniqueness of the KMS state on $\mO_{G_{\max}}$ is already proved in \cite{LR} for contracting cases and in \cite{BL} for more general cases (the argument in \cite{BL} is not restricted to self-similar group actions).   
In this paper, 
we show the existence of the KMS state on $\mO_{G_{\min}}$.
Furthermore,
we see that there exist unique tracial states on the gauge invariant subalgebras of $\mO_{G_{\max}}$ and $\mO_{G_{\min}}$.
This gives the factority of the gauge invariant subalgebra of the von Neumann algebra which we observe in the last section.
The above states are given by the measure of fixed points of each elements of $G$.

In the last section,
we discuss the von Neumann algebras defined on the GNS space of a unique KMS state on $\mO_{G_{\min}}$.
Von Neumann algebras associated with $\mO_{G_{\min}}$ and $\mO_{G_{\max}}$ coincide though $\mO_{G_{\min}}$ and $\mO_{G_{\max}}$ might not be isomorphic. 
Hence we write $\mO_G^{''}$ for this von Neumann algebra.
At the end of this paper we see that we can compute the type of the von Neumann algebras in nice cases.
In \cite{IK},
it is proved that the von Neumann algebras from rational functions and the Lyubich measure are AFD III$_{\la}$ factors where $0< \la< 1$ is a numbers determined by the degrees of the rational functions.
The following our main result is similar to this one.   
Our first main result (Theorem \ref{type}, Proposition \ref{AFD}) is the following.

\begin{mainthm}
If $\mu(\Ggen)=1$ and $G$ is amenable then $\mO_G^{''}$ is an AFD type III$_{|X|^{-1}}$ factor.
\end{mainthm}

In some cases,
we have the similar result without assuming amenability. 
We prepare a notation for our another main result.
Write $Y_g^n := \{ w \in X^\om \ | \ h(g, w^{(n)})=e\}$ where $w^{(n)} \in X^n$ denotes the first $n$ alphabets of $w$.  
Our another main result (Theorem \ref{main3}) is the following theorem.

\begin{mainthm}
If $\mu(\bigcup_{n \in \N}Y_g ^n) =1$ for any $g \in G$ then $\mO_G^{''}$ is an AFD type III$_{|X|^{-1}}$ factor.
\end{mainthm}

In the proof of the main theorem,
we will see that If $\mu(\bigcup_{n \in \N}Y_g ^n) =1$ then $\mu(\Ggen)=1$.
Moreover,
we will see the assumption of the main theorem holds for a large class of contracting self-similar actions.   

\subsection*{Acknowledgements}
The author appreciates his supervisor,
Reiji Tomatsu, for fruitful discussions and his constant encouragement.
He also expresses his gratitude to Yoshimichi Ueda for giving him the subject of research. 
He would like to thank Yuki Arano for an essential advice for Theorem \ref{type}. 

\section{The Bernoulli measure and $G$-generic points}

We begin with notations which are used in this paper. 
\begin{note}
Let $d$ be a natural number with $d \geq 2$.
Consider a finite set $X$.
In this paper,
$X^*$ denotes the set of finite words over the alphabet $X$.
In other words,
$X^* = \bigcup_{n \in \N} X^n $.
Write $X^\om$ for the set of unilateral infinite words over $X$.
If $w \in X^\om$ and $n \in \N$,
$w^{(n)} \in X^\om $ denotes the first $n$ letters of $w$.
\end{note}

We can identify $X^\N$ with $X^\om$,
and therefore it is equipped with the product topology of discrete topologies on $X$'s.
Thus $X^\om$ is homeomorphic to the Cantor space.
In this paper,
$G$ denotes a countable group.

\begin{defn}(\cite[Definition 2.1]{Nek})
A faithful action of a group $G$ on $X^\om $ is said to be \textit{self-similar}
if
for every $g \in G$ and $x \in X$ there exist $h \in G $ and $v \in X $
such that for any $w \in X^\om $,
\begin{equation}
g(xw) = vh(w).
\label{selfsim}
\end{equation}
%We use a symbol $(G, X)$ for a self-similar group action of a group $G$ over $X^\om$ though self-similar actions might not uniquely determined by $G$ and $X$.
\end{defn}

\begin{rem}
Using the equation (\ref{selfsim}) several times,
we see that for every $n \in \N$,
$g \in G$ and $u \in X^n$ there exist $h \in G $ and $v \in X^n $
such that 
\[
g(uw) = vh(w)
\]
for any $w \in X^\om $.
An easy calculation shows that $h$ and $v$ are uniquely determined by $g$ and $u$ and hence we write $h=h(g, u)$ and $v=v(g, u)$.
\end{rem}

For more details of self-similar actions,
see \cite{Nek2}.

\begin{exa}
Let $X=\{ 0, 1\} $,
and 
$G = (\Z /{2\Z })*(\Z /{2\Z }$). 
Take generators $\al , \be \in G$ with $\al ^2 = \be ^2 = e$ where $e$ is the unit of $G$.
Let $a$ and $b$ be the homeomorphisms on $X^\om $ with defined by: 
\[
a(0w) = 1w , \ a(1w) = 0w, 
\]
\[
 b(0w) = 0a(w), \ b(1w) = 1b(w),
\]
for $w \in X^\om $. 
A map defined by $\al \mapsto a$ and $\be \mapsto b$ is an injective group homomorphism.
Clearly the action is self-similar.  
\end{exa}

\begin{exa}\label{Gri}
Let $X=\{ 0, 1\}$.
Consider homeomorphisms $a, b, c, d$ given by:
\[
a(0w) = 1w , \ a(1w) = 0w, 
\]
\[
 b(0w) = 0a(w), \ b(1w) = 1c(w),
\]
\[
c(0w) = 0a(w) , \ c(1w) = 1d(w), 
\]
\[
 d(0w) = 0w , \ d(1w) = 1b(w),
\]
for $w \in X^\om$ .
Let $G$ be the subgroup of the group of homeomorphisms on $X^\om$ generated by $a, b, c, d$.
The above relations define a self-similar group action.
This group $G$ is called the Grigorchuk group. 
\end{exa}
For more examples,
see \cite{LR, Nek2} and so on.
The countability of $G$ is needed for the following property called $G$-generic.

\begin{defn}(\cite[Definition 4.1]{Nek})
Fix $g \in G$.
An element $w \in X^\om$ is said to be \textit{$g$-generic} if either $g(w) \neq w$ or there exist some neighborhood of $w$ consisting of the fixed points of $g$.
Let $\ggen$ be the set of all $g$-generic points.
Write $\Ggen := \bigcap_{g \in G} \ggen$.
An element $w \in \Ggen$ is said to be \textit{$G$-generic}.
\end{defn} 

Take any $u \in X^*$ and let $T_u$ be the shift operator on $X^\om$ given by $w \mapsto uw$.
Moreover $T_u^*$ denote the partial inverse map of $T_u$ defined on the range of $T_u$.
Write $\langle G, X \rangle := \{ T_{u_1}gT_{u_2}^* \ | \ u_1, u_2 \in X^*, g \in G\}$.

\begin{defn}(\cite[Definition 9.1]{Nek})
Fix $f \in \langle G, X \rangle$.
An element $w \in X^\om$ said to be \textit{$f$-generic} if either $f$ is not defined on $w$ or does not fix $w$ or fix $w$ together with a neighborhood of $w$.
We write $(X^\om)_{f\text{-gen}}$ for the set of $f$-generic points.
Also write $\Ggen ^\text{S} := \bigcap_{f \in \langle G, X \rangle} (X^\om)_{f\text{-gen}}$.
An element in $\Ggen ^\text{S}$ is said to be \textit{strictly $G$-generic}.
\end{defn}

\begin{rem}
For any $f \in \langle G, X \rangle$,
$\fgen$ is a open dense subset of $X^\om$ and therefore $\Ggen ^\text{S}$ is also a dense subset by the countability of $G$.
Let $\ffix$ be the set of fixed points of $f \in \langle G, X \rangle$.
By definition,
$X^\om \backslash \fgen \subset \ffix$.
Note that $\Ggen ^\text{S}$ is a $\langle G, X \rangle$-invariant set.
\end{rem}

Consider the uniform probability measure on the finite set $X$ and let $\mu$ be the product measure of them on $X^\om$.
The measure $\mu$ is often called the Bernoulli measure.
For any $u \in X^*$,
we have $\mu (uX^\om) = | u |^{-1}$ where $| u|$ is the length of $u$ and $uX^\om := \{ w \in X^\om \ | \ w^{(| u | )} = u \}$.

\begin{rem}
Take $f \in \langle G, X \rangle$ and write $f=T_{u_1}gT_{u_2}^*$.
If there exists a fixed point $w$ of $f$ and $|u_1| > |u_2|$ then $w \in u_1X^\om$ and there exists $u \in X^*$ such that $u_1 = u_2u$.
Hence we have a $w_1 \in X^\om$ such that $w=u_1w_1$.
The equation $T_{u_1}gT_{u_2}^*(u_2uw_1)=u_2uw_1$ implies that $g(uw_1)=w_1$.
From the self-similarity,
we have $T_{v(g, u)}h(g, u)w_1=w_1$ and therefore $w_1 \in v(g, u)X^\om$.
Thus there exists $w_2 \in X^\om$ such that $w_1 = v(g, u)w_2$.
Combining $T_{v(g, u)}h(g, u)w_1=w_1$ and $w_1 = v(g, u)w_2$,
we have $h(g, u)v(g, u)w_2=w_2$.
Therefore $w_2 \in v(h(g, u), v(g, u))X^\om$.
Repeating this,
we can uniquely determine $w$ from $u_1$, $u$ and $g$.
Hence the number of fixed points of $f$ is at most one. 
 
Assume that $|u_1| < |u_2|$ and there exists a fixed point $w$ of $f$.
Then there exist $u$ and $w_1$ such that $u_2 = u_1u$,
$w = u_2w_1$ and  $gw_1 = uw_1$.
By the above argument,
the number of fixed points of $f$ is at most one.
Consequently,
the set of fixed points of $f$ is a null set if $|u_1| \neq |u_2|$. 
\end{rem}

The following lemmas are easy but important for our study. 

\begin{lem}\label{gen}
If a countable group $G$ acts self-similarly on $X^\om$,
then the followings are equivalent.
\begin{enumerate}
\renewcommand{\labelenumi}{\textup{(\arabic{enumi})}}
\item
$\mu(\Ggen) =1$.
\item
$\mu(\ggen) = 1$ for any $g \in G$.
\item
$\mu(\Ggen ^\text{S})=1$.
\item
$\mu(\fgen) =1$ for any $f \in \langle G, X \rangle$.
\end{enumerate} 
\end{lem}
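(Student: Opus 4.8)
The plan is to route everything through the single nontrivial implication $(2)\Rightarrow(4)$, deriving the remaining implications from soft countability arguments and one obvious inclusion.

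First I would settle the two bracketing equivalences. Since $\Ggen=\bigcap_{g\in G}\ggen$ and $G$ is countable, a countable intersection of conull sets is conull, while conversely a single non-conull factor destroys conullity of the intersection; this gives $(1)\Leftrightarrow(2)$. The index set $\langle G, X\rangle$ is also countable (both $X^*$ and $G$ are), and $\Ggen ^\text{S}=\bigcap_{f\in\langle G, X\rangle}\fgen$, so the same reasoning gives $(3)\Leftrightarrow(4)$. Taking $u_1=u_2$ equal to the empty word realizes each $g\in G$ as an element of $\langle G, X\rangle$ for which $\fgen=\ggen$, hence $(4)\Rightarrow(2)$ — equivalently, $\Ggen ^\text{S}\subseteq\Ggen$ gives $(3)\Rightarrow(1)$. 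So it remains to show that $\mu(\ggen)=1$ for every $g\in G$ forces $\mu(\fgen)=1$ for every $f\in\langle G, X\rangle$.

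To prove $(2)\Rightarrow(4)$, fix $f=T_{u_1}gT_{u_2}^*$ and split according to the lengths of $u_1$ and $u_2$. If $|u_1|\neq|u_2|$, the remark preceding the lemma shows that $\ffix$ is a null set; since $X^\om\setminus\fgen\subseteq\ffix$, we conclude $\mu(\fgen)=1$ with no hypothesis needed. If $|u_1|=|u_2|=n$ but $u_1\neq u_2$, then for any $w$ in the domain $u_2X^\om$ the first $n$ letters of $f(w)$ spell $u_1\neq u_2=w^{(n)}$, so $f$ has no fixed point at all and $\fgen=X^\om$.

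The substantive case is $f=T_ugT_u^*$ with $|u|=n$, where $f$ is defined exactly on the open cylinder $uX^\om$ and satisfies $f(uw)=u\,g(w)$, so $T_u$ conjugates $g$ to $f|_{uX^\om}$. I would then transport $g$-genericity through this conjugacy: because $uX^\om$ is open, any small enough neighborhood of a point $uw$ already lies in the domain of $f$, and one checks directly from the definitions that $uw$ is $f$-generic if and only if $w$ is $g$-generic; hence $X^\om\setminus\fgen=T_u(X^\om\setminus\ggen)$. Since $\mu(T_u(A))=|X|^{-n}\mu(A)$ for measurable $A$, which is immediate from the definition of the product measure, the hypothesis $\mu(\ggen)=1$ yields $\mu(X^\om\setminus\fgen)=|X|^{-n}\mu(X^\om\setminus\ggen)=0$, completing the argument. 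I expect the one point demanding care to be precisely this identification of $f$-generic with $g$-generic points in the case $f=T_ugT_u^*$, where one must ensure that the neighborhood witnessing genericity can be chosen inside the open domain $uX^\om$; everything else is bookkeeping.
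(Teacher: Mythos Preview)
Your proof is correct and follows essentially the same route as the paper: the countability arguments for $(1)\Leftrightarrow(2)$ and $(3)\Leftrightarrow(4)$, the trivial implication $(3)\Rightarrow(1)$, and for $(2)\Rightarrow(4)$ the reduction to $f=T_ugT_u^*$ followed by transporting genericity through the conjugacy $T_u$. The only cosmetic difference is that the paper packages the key step as $\ffix=u\,\gfix$ and $\ffix\cap\fgen=u(\gfix\cap\ggen)$ and then compares measures, whereas you identify $X^\om\setminus\fgen=T_u(X^\om\setminus\ggen)$ directly; these are the same computation.
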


\begin{proof}
For the equivalence of (1) and (2) we use an easy measure theoretical argument.
If $\mu(\Ggen)=1$,
then trivially $\mu(\ggen) = 1$ for any $g \in G$.
If $\mu(\ggen)=1$ for any $g \in G$,
then 
\[
\mu(\ggeno \cap \ggent ) = \mu(\ggeno) + \mu(\ggent) - \mu(\ggeno \cup \ggent ) \geq 1
\] 
for any $g_1, g_2 \in G$.
Repeating the same argument several times, 
we have $\mu(\bigcap_{g \in F} \ggen) = 1$ for any finite set $F \subset G$.
Since $G$ is countable,
we get the equivalence of (1) and (2).
Similarly,
(3) and (4) are equivalent.
Since $\Ggen ^\text{S}$ is a subset of $\Ggen$ and therefore (3) implies (1).

At last,
we assume (2) and prove (4). 
Take any $f \in \langle G, X \rangle$.
We show that $\mu(\fgen) =1$.
We write $f=T_{u_1}gT_{u_2}^*$ for some $u_1, u_2 \in X^*$ and $g \in G$.
From the above remark,
we assume that $|u_1| = |u_2|$.
If $u_1 \neq u_2$ then the set of fixed points of $f$ is empty so we assume $u_1=u_2$.
In this case we have $\gfix = u_1\ffix$ and $\gfix \cap \fgen = u_1( \ffix \cap \fgen)$.
The assumption implies that $\mu(\gfix) = \mu(\gfix \cap \ggen)$ and hence $\mu(\ffix) = \mu(\ffix \cap \fgen)$.
Thus (4) holds. 
\end{proof} 

\begin{lem}\label{gen3}
For any $g \in G$,
the following equations hold:
%\begin{itemize}
%\item
\begin{equation}
\lim_{n \to \infty}|X|^{-n} | \{ u \in X^n  \ | \ v(g, u) = u \} | = \mu(\gfix).
\end{equation}
%\item
\begin{equation}
\lim_{n \to \infty}|X|^{-n} | \{ u \in X^n  \ | \ h(g, u) \neq e, v(g, u) =u \} | = \mu(X^\om \backslash \ggen).
\end{equation}
%\end{itemize}
\end{lem}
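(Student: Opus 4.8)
The plan is to realise each set whose measure appears on the left-hand side as a nested intersection of cylinder sets, and then to invoke continuity of the probability measure $\mu$ from above along a decreasing sequence of clopen sets.

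First I would record the composition identities obtained by iterating (\ref{selfsim}): for $u \in X^n$ and $x \in X$ one has $v(g,ux)=v(g,u)\,v(h(g,u),x)$ and $h(g,ux)=h(h(g,u),x)$. For the first equation set $A_n:=\bigcup\{uX^\om : u\in X^n,\ v(g,u)=u\}$, a clopen set with $\mu(A_n)=|X|^{-n}|\{u\in X^n : v(g,u)=u\}|$. Since $g(uX^\om)=v(g,u)X^\om$, a point $w$ with $g(w)=w$ must satisfy $v(g,w^{(n)})=w^{(n)}$ for every $n$, so $\gfix\subseteq\bigcap_n A_n$; conversely, if $w\in\bigcap_n A_n$ then $g(w)\in\bigcap_n v(g,w^{(n)})X^\om=\bigcap_n w^{(n)}X^\om=\{w\}$, so $w\in\gfix$. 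The first composition identity shows that $v(g,ux)=ux$ forces $v(g,u)=u$, hence $A_{n+1}\subseteq A_n$. Thus $\gfix=\bigcap_n A_n$ is a decreasing intersection of clopen sets and $\mu(\gfix)=\lim_n\mu(A_n)$, which is the first equation.

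For the second equation I would first note two facts: $g$ restricts to the identity on $uX^\om$ precisely when $v(g,u)=u$ and $h(g,u)=e$ (using faithfulness of the action), and the cylinders $\{w^{(n)}X^\om\}_n$ form a neighbourhood basis at $w$. Consequently a point of $X^\om\backslash\ggen$ is exactly a fixed point $w$ of $g$ such that, for every $n$, $g$ does not fix $w^{(n)}X^\om$ pointwise, i.e. $v(g,w^{(n)})=w^{(n)}$ and $h(g,w^{(n)})\neq e$ for all $n$. Setting $B_n:=\bigcup\{uX^\om : u\in X^n,\ v(g,u)=u,\ h(g,u)\neq e\}$, the composition identities give $B_{n+1}\subseteq B_n$ (from $v(g,ux)=ux$ and $h(g,ux)\neq e$ one gets $v(g,u)=u$ and $h(g,u)\neq e$), and the discussion above yields $X^\om\backslash\ggen=\bigcap_n B_n$. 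Since $\mu(B_n)=|X|^{-n}|\{u\in X^n : h(g,u)\neq e,\ v(g,u)=u\}|$, continuity from above again gives the claim. (Alternatively, $\mu(X^\om\backslash\ggen)=\mu(\gfix)-\mu(\gfix\cap\ggen)$, and $\gfix\cap\ggen=\bigcup_n C_n$ with $C_n:=\bigcup\{uX^\om : v(g,u)=u,\ h(g,u)=e\}$ an increasing union, so the first equation together with continuity from below finishes the proof.)

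The manipulations are routine; the points needing care are the monotonicity of $\{A_n\}$ and $\{B_n\}$, which is precisely where the composition rules for $v(g,\cdot)$ and $h(g,\cdot)$ are used, and, for the second equation, the translation of the topological condition ``$g$ fixes a neighbourhood of $w$'' into the combinatorial condition ``$h(g,w^{(n)})=e$ for some $n$'', which rests on cylinders forming a neighbourhood basis and on faithfulness of the action. With these in hand, each identity is just continuity of $\mu$ along a monotone sequence of clopen sets.
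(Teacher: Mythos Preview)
Your proof is correct and follows essentially the same strategy as the paper: rewrite the counting expression as the $\mu$-measure of a union of level-$n$ cylinders, identify the monotone limit of these clopen sets with $\gfix$ (resp.\ $X^\om\setminus\ggen$), and invoke continuity of the measure. Your write-up is in fact more careful than the paper's brief sketch---which contains a slip, calling the sequence ``increasing'' and writing a union where it should say ``decreasing'' and intersection---and your explicit use of the composition identities for $v(g,\cdot)$ and $h(g,\cdot)$ to verify the nesting $A_{n+1}\subseteq A_n$, $B_{n+1}\subseteq B_n$ is precisely what justifies that step.
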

\begin{proof}
Note that
\[
\bigcup_{\{ u \in X^n  | v(g, u) = u \}} uX^\om = \{ w \in X^\om \ | \ v(g, w^{(n)})=w^{(n)} \}
\]
for any $n \in \N$.
Then 
$\{ w \in X^\om \ | \ v(g, w^{(n)})=w^{(n)} \}$'s are increasing sequence and 
\[
\bigcup_{n \in \N} \{ w \in X^\om \ | \  v(g, w^{(n)})=w^{(n)} \} = \gfix .
\]
Thus the first statement holds.
Similarly,
the second statement does.
\end{proof}

\begin{lem}\label{gen2}
For any self-similar group action of $G$ on $X^\om$ and any $g \in G$,
we have the following equations:
%\begin{description}
%\item
\begin{equation}
\mu(\gfix) = |X|^{-1}\sum_{x \in X} \de_{x, v(g, x)}\mu(\hxfix).
\end{equation}
%\item
\begin{equation}
\mu(\gfix \cap \ggen) = |X|^{-1}\sum_{x \in X} \de_{x, v(g, x)} \mu(\hxfix \cap \hxgen ).
\end{equation}
%\end{description}
where $\de$ is Kronecker's $\de$.
\end{lem}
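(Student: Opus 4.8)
The plan is to decompose $X^\om$ according to the first letter of a word and to exploit the self-similarity identity $g(xw') = v(g,x)\,h(g,x)(w')$, which holds for all $x \in X$ and $w' \in X^\om$.

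First I would treat the first equation. A word $w = xw'$ with $x \in X$ and $w' \in X^\om$ satisfies $g(w)=w$ precisely when the first letters agree, i.e.\ $v(g,x)=x$, and the tail is fixed, i.e.\ $h(g,x)(w')=w'$; in other words $w \in \gfix$ if and only if $v(g,x)=x$ and $w' \in \hxfix$. Since the cylinders $xX^\om = T_x(X^\om)$, $x \in X$, are pairwise disjoint, this yields the partition $\gfix = \bigsqcup_{x \in X,\ v(g,x)=x} T_x(\hxfix)$. The Bernoulli measure is a product measure, so $\mu(T_x(A)) = |X|^{-1}\mu(A)$ for every Borel set $A \subseteq X^\om$; summing over those $x$ with $v(g,x)=x$ gives the first equation.

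Next I would handle the second equation by showing that, for $w = xw' \in \gfix$, one has $w \in \ggen$ if and only if $w' \in \hxgen$. Here both $g(w)=w$ and $h(g,x)(w')=w'$ hold, so each genericity condition reduces to the existence of a neighborhood consisting of fixed points. If $V$ is any neighborhood of $w'$, then $T_x(V)$ is a neighborhood of $w$ contained in $xX^\om$, and for $v \in V$ we have $g(xv) = v(g,x)\,h(g,x)(v) = x\,h(g,x)(v)$ since $v(g,x)=x$; hence $g$ fixes $T_x(V)$ pointwise exactly when $h(g,x)$ fixes $V$ pointwise. As the cylinders contained in $xX^\om$ form a neighborhood basis of $w$, it follows that $g$ fixes a neighborhood of $w$ if and only if $h(g,x)$ fixes a neighborhood of $w'$. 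Combining this with the description of $\gfix$ above gives $\gfix \cap \ggen = \bigsqcup_{x \in X,\ v(g,x)=x} T_x(\hxfix \cap \hxgen)$, and applying $\mu$ as before yields the second equation.

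The only mildly delicate step is the translation between neighborhoods of $w$ and of $w'$, namely the equivalence of $g$-invariance of a cylinder around $w$ with $h(g,x)$-invariance of the corresponding cylinder around $w'$; everything else is bookkeeping with the product structure of $\mu$ and the disjointness of the level-one cylinders.
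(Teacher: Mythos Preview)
Your proof is correct and follows exactly the paper's approach: the paper asserts the two set-theoretic decompositions $\gfix = \bigcup_{\{x \in X \mid v(g,x)=x\}} x(\hxfix)$ and $\gfix \cap \ggen = \bigcup_{\{x \in X \mid v(g,x)=x\}} x(\hxfix \cap \hxgen)$ and then applies $\mu$. You simply supply the details the paper omits, in particular the verification of the second decomposition via the correspondence between neighborhoods of $w=xw'$ and of $w'$.
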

\begin{proof}
We can easily check that
\[
\gfix = \bigcup_{ \{x \in X | v(g, x)=x \} } x(\hxfix)
\]
and
\[
\gfix \cap \ggen = \bigcup_{ \{x \in X | v(g, x)=x \} } x(\hxfix \cap \hxgen)  
\]
where $xA := \{ xw \ | \ w \in A \}$ for subsets $A$ of $X^\om$. 
We consider the measure on the above equations to finish the proof.
\end{proof}

We observe that Lemma \ref{gen2} gives a KMS condition for gauge actions on the Cuntz--Pimsner algebras associated with self-similar group actions. 
The following theorem shows the dichotomy for self-similar group actions.

\begin{thm}
For any self-similar group action of any countable group $G$ on $X^\om$,
we have either $\mu(\Ggen) =1$ or $\mu(\Ggen) =0$.
\end{thm}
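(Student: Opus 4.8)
The plan is to show that the complement $B:=X^\om\setminus\Ggen$ of the $G$-generic set is, modulo $\mu$-null sets, invariant under the one-sided shift $\sigma\colon X^\om\to X^\om$, $\sigma(xw)=w$, and then to conclude with a $0$--$1$ law. Since $\Ggen=\bigcap_{g\in G}\ggen$, we have $B=\bigcup_{g\in G}(X^\om\setminus\ggen)$, so it suffices to analyse $X^\om\setminus\ggen$ for a single $g$ and take a countable union; throughout we use that $g$ fixes a neighbourhood of a point $z$ if and only if $g$ fixes pointwise some cylinder $z^{(k)}X^\om$.

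First I would check measurability. For fixed $g$ the set $\ggen$ is open: it is the union of $\{z: g(z)\neq z\}$, which is open because $\gfix$ is closed, with $\operatorname{int}(\gfix)$, which is precisely the set of points admitting a neighbourhood of $g$-fixed points. Hence $X^\om\setminus\ggen=\gfix\setminus\operatorname{int}(\gfix)$ is closed, and $B$ is an $F_\sigma$-set since $G$ is countable; in particular $B$ and $\Ggen$ are $\mu$-measurable.

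The crucial step, where self-similarity enters, is the inclusion $\sigma(B)\subseteq B$. Let $w\in B$, write $w=xy$ with $x\in X$ and $y=\sigma(w)$, and choose $g\in G$ for which $w$ is not $g$-generic, i.e.\ $g(w)=w$ while $g$ fixes no neighbourhood of $w$. By \eqref{selfsim}, $g(xy)=v(g,x)\,h(g,x)(y)$, and comparison with $xy$ forces $v(g,x)=x$ and $h(g,x)(y)=y$. I claim $h(g,x)$ fixes no neighbourhood of $y$: otherwise it would fix pointwise some cylinder $y^{(m)}X^\om$, and then $g(xz)=v(g,x)\,h(g,x)(z)=xz$ for every $z\in y^{(m)}X^\om$, so $g$ would fix pointwise the cylinder $(xy^{(m)})X^\om$, which is a neighbourhood of $w$, contradicting the choice of $g$. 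Hence $y$ is not $h(g,x)$-generic, so $\sigma(w)=y\in B$.

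Finally, since $\sigma$ preserves $\mu$, the inclusion $\sigma(B)\subseteq B$ gives $B\subseteq\sigma^{-1}(B)\subseteq\sigma^{-2}(B)\subseteq\cdots$, all of measure $\mu(B)$; hence $A:=\bigcup_{n\geq0}\sigma^{-n}(B)$ satisfies $\mu(A)=\mu(B)$ and $\sigma^{-1}(A)=A$, so $A=\sigma^{-n}(A)$ for every $n$. As $\sigma^{-n}(A)$ depends only on the coordinates of index exceeding $n$, the set $A$ lies in the tail $\sigma$-algebra of the coordinate sequence, which is i.i.d.\ under the Bernoulli measure $\mu$; Kolmogorov's $0$--$1$ law therefore gives $\mu(A)\in\{0,1\}$, whence $\mu(B)\in\{0,1\}$ and $\mu(\Ggen)\in\{0,1\}$. (Alternatively one may invoke ergodicity of the Bernoulli shift.) I expect the only genuine obstacle to be the self-similarity argument of the third paragraph, namely making precise the transfer of the ``non-generic fixed point'' property between $g$ and its restriction $h(g,x)$; the remaining steps are routine, and in particular Lemma \ref{gen2} is not needed here, though subtracting its two displayed identities reproduces the same shift-compatibility at the level of the numbers $\mu(X^\om\setminus\ggen)$.
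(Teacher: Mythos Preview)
Your argument is correct and takes a genuinely different route from the paper's own proof. The paper does not use the shift map or any $0$--$1$ law: instead it assumes $\mu(\Ggen)\neq 1$, picks $g$ with $\mu(X^\om\setminus\ggen)>0$, and iterates the identity
\[
\mu(X^\om\setminus\ggen)=|X|^{-n}\sum_{\substack{u\in X^n\\ h(g,u)\neq e}}\de_{u,v(g,u)}\,\mu\bigl(X^\om\setminus\hugen\bigr)
\]
obtained by subtracting the two equations of Lemma~\ref{gen2}; it then bounds each summand by the maximum one and uses Lemma~\ref{gen3} to recognise the remaining factor as $\mu(X^\om\setminus\ggen)$, forcing that maximum to tend to $1$ and hence $\mu(\Ggen)=0$. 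Your proof packages the same self-similarity mechanism (the transfer from $g$ to $h(g,x)$) as the single inclusion $\sigma(B)\subseteq B$ and then defers the dichotomy to Kolmogorov's $0$--$1$ law; this is shorter, more conceptual, and makes no use of Lemmas~\ref{gen3} or~\ref{gen2}. The paper's approach, by contrast, is more quantitative: it actually produces a sequence $g_n\in G$ with $\mu(X^\om\setminus(X^\om)_{g_n\text{-gen}})\to 1$, which is extra information your proof does not yield. Your remark that subtracting the two displayed identities of Lemma~\ref{gen2} recovers the same compatibility at the level of measures is exactly the bridge between the two arguments.
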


\begin{proof}
If $\mu(\Ggen) \neq 1$,
then $\mu(\ggen)\neq 1$ for some $g \in G$ by Lemma \ref{gen}.
Combining Lemma \ref{gen2} and $X^\om \backslash \ggen = \gfix \backslash (\gfix \cap \ggen)$,
we have 
\[
\mu(X^\om \backslash \ggen) = |X|^{-1}\sum_{x \in X} \de_{x, v(g, x)} \mu(X^\om \backslash \hxgen ).
\]
Note that $( X^\omega )_{e\textrm{-gen}} = X^\om $ where $e$ is the unit of $G$ and we have 
\begin{equation}
\label{ggenc}
\mu(X^\om \backslash \ggen) = |X|^{-1}\sum_{\{ x \in X| h(g, x ) \neq e \} } \de_{x, v(g, x)} \mu(X^\om \backslash \hxgen ).
\end{equation}
Using (\ref{ggenc}) repeatedly,
we have 
\begin{equation}
\label{ggenc2}
\mu(X^\om \backslash \ggen) = |X|^{-n}\sum_{\{ x \in X^n | h(g, u) \neq e\}} \de_{u, v(g, u)} \mu(X^\om \backslash \hugen )
\end{equation}
for any $n \in \N$.
Choose $g_n \in G$ for any $n \in \N$ such that 
\[
\mu(X^\om \backslash ( X^\omega )_{g_n\textrm{-gen}} ) = \max\{ \mu(X^\om \backslash ( X^\omega )_{h(g, u)\textrm{-gen}} ) \ | \ u \in X^n \ \textrm{with} \ v(g, u) = u \ \text{and} \ h(g, u) \neq e \}.
\]
We can take such $\{ g_n \}$ because for any $n \in \N$ there exists at least one $u \in X^n$ such that $v(g, u) = u$ and $h(g, u) \neq e$.
Indeed,
if we have a $n \in \N$ such that for any $u \in X^n$ either $v(g, u) \neq u$ or $h(g, u) =e$ then
$\mu(X^\om \backslash \ggen ) = 0$ by (\ref{ggenc2}),
this is a contradiction.
Using (\ref{ggenc2}),
we have
\[
\mu(X^\om \backslash \ggen) \leq |X|^{-n}\sum_{\{ x \in X^n | h(g, u) \neq e\}} \de_{u, v(g, u)} \mu(( X^\omega )_{g_n\textrm{-gen}} )
\]
for any $n \in \N$.
Taking the limit, 
we obtain
\begin{equation*}
\begin{split}
\mu(X^\om \backslash \ggen) &\leq \lim_{n \to \infty}\mu(X^\om \backslash ( X^\omega )_{g_n\textrm{-gen}} ) |X|^{-n}|\{ x \in X^n \ | \ h(g, u) \neq e \ \textrm{and} \ v(g, u) =u \} | \\
&= \mu(X^\om \backslash \ggen) \lim_{n \to \infty}\mu(X^\om \backslash ( X^\omega )_{g_n\textrm{-gen}}. 
\end{split}
\end{equation*}
We apply Lemma \ref{gen3} above.
Note that $\mu(X^\om \backslash \ggen) \neq 0$ and $\lim_{n \to \infty}\mu(X^\om \backslash ( X^\omega )_{g_n\textrm{-gen}} )$ is at most one.
Then we have $\lim_{n \to \infty}\mu(X^\om \backslash ( X^\omega )_{g_n\textrm{-gen}} ) =1$.
This shows $\mu(\Ggen) = 0$.
\end{proof} 

As a direct corollary of the previous theorem and Lemma \ref{gen},
we have the following one. 
\begin{cor}
For any self-similar action of any countable group $G$ on $X^\om$,
either $\mu(\Ggen ^\text{S})=1$ or  $\mu(\Ggen ^\text{S})=0$.
\end{cor}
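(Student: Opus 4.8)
The plan is to obtain this dichotomy directly from the preceding theorem (which gives $\mu(\Ggen)\in\{0,1\}$) together with two facts already recorded above: the inclusion $\Ggen ^\text{S}\subseteq\Ggen$, and the equivalence of conditions (1) and (3) in Lemma~\ref{gen}.

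First I would note the elementary inequality $\mu(\Ggen ^\text{S})\le\mu(\Ggen)$, which is immediate from $\Ggen ^\text{S}\subseteq\Ggen$; this inclusion is exactly the one already used in the proof of Lemma~\ref{gen}, and it holds because $G\subseteq\langle G,X\rangle$. Next, the equivalence (1)$\Leftrightarrow$(3) of Lemma~\ref{gen} says that $\mu(\Ggen ^\text{S})=1$ if and only if $\mu(\Ggen)=1$. Now apply the theorem and split into cases: if $\mu(\Ggen)=1$, then the equivalence gives $\mu(\Ggen ^\text{S})=1$; if $\mu(\Ggen)=0$, then the inequality gives $\mu(\Ggen ^\text{S})=0$. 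Since these are the only two possibilities, the corollary follows. (Equivalently, one can avoid the case split altogether by feeding the value $\mu(\Ggen)\in\{0,1\}$ supplied by the theorem into the two relations $\mu(\Ggen ^\text{S})\le\mu(\Ggen)$ and $\mu(\Ggen ^\text{S})=1\Leftrightarrow\mu(\Ggen)=1$.)

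I do not expect any genuine obstacle here: the statement is a formal corollary of material proved earlier, which is presumably why it is presented as one. The only point worth a moment's thought is possible circularity — one should check that the direction of Lemma~\ref{gen} being invoked is not itself derived from this corollary. It is not, since the equivalence (1)$\Leftrightarrow$(3) in Lemma~\ref{gen} is established directly from the measure-theoretic manipulation with the shift operators $T_u$, independently of the dichotomy.
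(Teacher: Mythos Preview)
Your argument is correct and is exactly the one the paper has in mind: the corollary is stated immediately after the theorem with the remark that it follows directly from the theorem together with Lemma~\ref{gen}, and your case split via the inclusion $\Ggen^{\text{S}}\subseteq\Ggen$ and the equivalence (1)$\Leftrightarrow$(3) is precisely how that deduction goes. There is no circularity, since Lemma~\ref{gen} is proved before the dichotomy theorem.
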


In the next section,
we see that a KMS state nicely behaves in case $\mu(\Ggen) =1$.
Many self-similar actions satisfy the condition $\mu(\Ggen) =1$.
To see this,
we recall the definition of contracting.
\begin{defn}(\cite[Definition 2.2]{Nek2})
A self-similar group action of $G$ on $X^\om$ is said to be \textit{contracting} if there exists a finite set $\mN \subset G$ satisfying the following condition:

For any $g \in G$ there exists $n \in \N$ such that we have $h(g, v) \in \mN$ for any $v \in X^*$ whose length is larger than $n$.  

If a self-similar action is contracting,
the smallest finite set of $G$ satisfying the above condition is called the nucleus of $G$.
\end{defn} 

We can actually find the following proposition in the proof of \cite[Theorem 7.3 (3)]{LR} but for reader's convenience we prove here.

\begin{prop}\label{cont}
For any contracting self-similar group action of $G$ on $X^\om$,
we have 
\[
\mu(\Ggen)=1.
\] 
\end{prop}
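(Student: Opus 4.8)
The plan is to show that under the contracting assumption, only finitely many group elements can fail the $g$-genericity condition in a way that matters, and then to bound the measure of the non-generic set by a geometric-type estimate. By Lemma \ref{gen}, it suffices to prove $\mu(\ggen) = 1$ for each $g \in G$, equivalently $\mu(X^\om \backslash \ggen) = 0$.

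First I would fix $g \in G$ and apply the formula (\ref{ggenc2}) from the proof of the dichotomy theorem, which after $n$ iterations expresses $\mu(X^\om \backslash \ggen)$ as
\[
\mu(X^\om \backslash \ggen) = |X|^{-n}\sum_{\{ u \in X^n \ | \ h(g, u) \neq e\}} \de_{u, v(g, u)} \mu(X^\om \backslash \hugen ).
\]
Now I would invoke the contracting hypothesis: there is a finite nucleus $\mN$ and an $n_0 = n_0(g)$ such that $h(g, u) \in \mN$ for all $u$ with $|u| \geq n_0$. So for $n \geq n_0$, every term $\mu(X^\om \backslash \hugen)$ appearing in the sum has $h(g,u)$ ranging over the finite set $\mN \setminus \{e\}$. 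Let $C := \max\{\mu(X^\om \backslash (X^\om)_{s\textrm{-gen}}) \ | \ s \in \mN\}$; this is a fixed constant (independent of $n$) and is at most $1$. Then the displayed identity gives
\[
\mu(X^\om \backslash \ggen) \leq C \cdot |X|^{-n} \, |\{ u \in X^n \ | \ h(g,u) \neq e, \ v(g,u) = u \}|,
\]
and by Lemma \ref{gen3} the right-hand side converges, as $n \to \infty$, to $C \cdot \mu(X^\om \backslash \ggen)$. Hence $\mu(X^\om \backslash \ggen)(1 - C) \leq 0$.

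To finish I need to rule out $C = 1$, or else argue directly. Actually the cleanest route is: the same dichotomy argument (or Theorem \ref{type}'s predecessor, the dichotomy theorem in this section) already tells us $\mu(\Ggen)$ is either $0$ or $1$, so it is enough to exhibit one $G$-generic point, or to show $\mu(X^\om \backslash \ggen) < 1$ for every $g$. The set $\Ggen$ is dense (by the countability remark, since each $\ggen$ is open dense — a point is non-$g$-generic only if it is a boundary point of the fixed-point set of $g$, which is closed with empty interior on its complement), so $\Ggen$ is nonempty; combined with the dichotomy this forces $\mu(\Ggen) = 1$ unless... — but density alone does not give positive measure, so I should instead close the loop via the inequality above. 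The point is that if $\mu(X^\om \backslash \ggen) \neq 0$ then by the dichotomy it equals... no, the dichotomy is about $\Ggen$ not $\ggen$. So the honest finish: apply the inequality $\mu(X^\om \backslash \ggen) \leq C\,\mu(X^\om \backslash \ggen)$ together with the observation that $C$, being a max over the finite nucleus, can be shown to be strictly less than $1$ \emph{provided no element of the nucleus has a fixed-point set with nonempty interior that is also a boundary} — and in fact one shows that for contracting actions, each $(X^\om)_{s\textrm{-gen}}$ has full measure by a separate finiteness argument, or one simply notes $C < 1$ is automatic once we know the non-generic set for a single nucleus element is null, then bootstraps.

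The main obstacle is exactly this last circularity: the estimate $\mu(X^\om \backslash \ggen) \leq C\,\mu(X^\om \backslash \ggen)$ is only useful if $C < 1$, and $C$ is a supremum over the nucleus of quantities of the same type we are trying to bound. I expect the resolution to be an induction or a minimality argument: order elements of the nucleus, pick one realizing the maximum non-generic measure, and apply (\ref{ggenc2}) to \emph{that} element — since the nucleus is closed under the $h(\cdot, u)$ operation for large $u$ (a standard property of nuclei), the terms on the right are again governed by $C$, and one derives $C \leq C \cdot \lim_n \mu(X^\om \backslash (X^\om)_{g_n\textrm{-gen}})$ forcing the limit to be $1$, which then (as in the dichotomy proof) yields $\mu(\Ggen) = 0$ — but this contradicts density/nonemptiness of $\Ggen^{\text{S}}$ only if we already know positive measure. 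The safe formulation I would commit to: run the dichotomy theorem's argument verbatim with the nucleus providing the needed uniformity, conclude that assuming $\mu(\Ggen) \neq 1$ leads to $\lim_n \mu(X^\om \backslash (X^\om)_{g_n\textrm{-gen}}) = 1$ with all $g_n$ drawable from the finite nucleus, hence some fixed $s \in \mN$ has $\mu(X^\om \backslash (X^\om)_{s\textrm{-gen}}) = 1$, i.e. $g$-non-generic points of $s$ have full measure; but the set of such points is contained in the topological boundary of $\ffix[s]$, a closed nowhere-dense set, and a direct computation using that $s$ has only finitely many "germs" (again contracting) shows this boundary is null — the desired contradiction. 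I would present the write-up in that order: reduce via Lemma \ref{gen}, iterate (\ref{ggenc2}), insert the nucleus for uniformity, extract a single bad nucleus element, and kill it with a nowhere-density/finiteness argument.
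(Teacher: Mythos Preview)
Your proposal has a genuine gap that you yourself identify but do not close. The inequality $\mu(X^\om \backslash \ggen) \leq C \cdot \mu(X^\om \backslash \ggen)$ is useless unless $C < 1$, and your attempts to force $C < 1$ are circular or insufficient. Density of $\Ggen$ gives no measure information. Your final move --- extract a nucleus element $s$ with $\mu(X^\om \backslash (X^\om)_{s\textrm{-gen}}) = 1$ and then claim this set is null because it lies in the boundary of $(X^\om)_s$, a nowhere-dense set --- fails because nowhere-dense closed sets can have full measure in $X^\om$ (fat Cantor sets). The vague appeal to ``finitely many germs'' does not supply an argument; it is precisely the content of the proposition you are trying to prove.

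The paper's proof avoids this trap by never attempting to bound the measure factors $\mu(X^\om \backslash \hugen)$ in the iterated identity. Instead it bounds the \emph{number of terms} directly, and this is where contracting is actually used. Since $\{h(g,u) : u \in X^*\}$ is finite and the action is faithful, there is a single $m$ such that every non-identity element in this finite set moves at least one word of length $m$. Consequently, whenever $h(g,u) \neq e$, at least one of the $|X|^m$ extensions $uu'$ satisfies $v(g,uu') \neq uu'$. This yields the inductive bound
\[
|\{u \in X^{mn} : v(g,u)=u,\ h(g,u)\neq e\}| \leq (|X|^m - 1)^n,
\]
and Lemma~\ref{gen3} then gives $\mu(X^\om \backslash \ggen) \leq \lim_n (1 - |X|^{-m})^n = 0$. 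The point you were missing is that the contracting hypothesis should be spent on a combinatorial count of ``bad'' finite words, not on the measures of non-generic sets for nucleus elements.
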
  
\begin{proof}
Take any $g \in G \backslash \{e\}$.
We show that $\mu(X^\om \backslash \ggen) = 0$.
Since the action is contracting,
$\{h(g ,u) \ | \ u \in X^*\}$ is a finite set of $G$.
Hence there exists $m \in \N$ such that for every $u \in X^*$ with $h(g, u) \neq e$ there exists $u' \in X^m$ with $v(h(g, u), u') \neq u'$.
Using inductive argument we see that
\begin{equation}
\label{ind}
|\{ u \in X^{mn} | v(g, u) = u , h(g, u) \neq e \}| \leq (|X|^m - 1)^n
\end{equation}
for any $n \in \N$.
If $n =1$,
then 
\[
| \{ u \in X^m | v(g, u) = u \} | \leq |X|^m -1
\]
by the choice of $m$ (consider the 0 length word $\emptyset \in X^*$ and we have a $u' \in X^m$ with $v(g, u') \neq u'$).
Next we assume that (\ref{ind}) holds for some $n \in \N$.
Note that
\begin{equation*}
%\begin{split}
\{ u \in X^{m(n +1)} | v(g, u) = u, h(g, u) \neq e \} 
=\{ uu' \in X^{m(n +1)} | v(g, uu') = uu', h(g, u) \neq e, h(h(g, u), u') \neq e \}.
%\end{split}
\end{equation*}
For any $ u \in \{ u \in X^{m(n +1)} | v(g, u) = u, h(g, u) \neq e \}$,
we get 
\[
|\{u' \in X^m | v(h(g, u), u') = u', h(h(g, u), u') \neq e \}| \leq (|X|^m - 1)
\]
since there exists an element in $X^m$ which $h(g, u)$ does not fix. 
Hence,
\[
|\{ u \in X^{m(n +1)} | v(g, u) = u, h(g, u) \neq e \}|
\leq (|X|^m -1 )|\{ u \in X^{mn} | v(g, u) = u , h(g, u) \neq e \}| 
\leq (|X|^m -1)^{n +1}
\]
by the inductive assumption.
Consequently, 
We get (\ref{ind}) .
Now we have 
\[
\mu(X^\om \backslash \ggen ) = \lim_{n \to \infty} |X|^{-mn}|\{ u \in X^{mn} | v(g, u) = u , h(g, u) \neq e \}| \leq \lim_{n \to \infty} (|1 - |X|^{-m})^n = 0.
\]
\end{proof}

\section{KMS states on Cuntz--Pimsner algebras associated with self-similar actions}

At first,
we recall Cuntz--Pimsner algebras introduced by Nekrashevych from self-similar group actions in \cite{Nek, Nek2}. 
Recall that we always assume that a countable group $G$ acts self-similarly on $X^\om$.

\begin{defn}(\cite[Definition 3.1]{Nek})
Let $\mO_{G_{\max}}$ be the universal \AL generated by $G$ (we assume that every relation in $G$ is preserved) and $\{ S_x \ | \ x \in X \}$ satisfying the following relations for any $x, y \in X$ and $g \in G$:
\begin{equation}
g^*g = gg^* = 1,
\end{equation}

\begin{equation}
S_x^*S_y = \de_{x, y},
\label{isometry}
\end{equation}

\begin{equation}
\sum_{x \in X}S_xS_x^* = 1,
\label{Cuntz}
\end{equation}

\begin{equation}
\label{similar}
gS_x = S_{v(g, x)}h(g, x),
\end{equation}
where $\de_{x, y}$ denotes the Kronecker delta. 
\end{defn}

For $u=x_1x_2 \cdots x_n \in X^n$,
we write $S_u := S_{x_1}\cdots S_{x_n}$.
Equations (\ref{isometry}) and  (\ref{Cuntz}) imply that $\mO_{G_{\max}}$ contains the Cuntz algebra $\mO_{|X|}$.

\begin{note}
For a strictly $G$-generic point $w \in X^\om$,
$\langle G, X \rangle(w)$ denotes $\langle G, X \rangle$-orbit of $w$,
i.e. $\langle G, X \rangle(w) = \{ f(w) \ | \ f \in \langle G, X \rangle \ \text{such that} \ f \ \text{is defined on} \ w \}$.
Let $L^2(\langle G, X \rangle(w))$ be the set of $L^2$-functions on $\langle G, X \rangle(w)$.
We naturally regard each shift operator $T_x$ as a isometry $S_x$ on $L^2(\langle G, X \rangle(w))$ given by $S_x(\de_{w_1}) :=\de_{xw_1}$ where $w_1 \in \langle G, X \rangle(w)$ and $\{ \de_{w_1} \ |\ w_1 \in \langle G, X \rangle(w) \}$ is a canonical orthonomal basis of $L^2(\langle G, X \rangle(w))$. 
Similarly,
we regard each $g \in G$ as a unitary on $L^2(\langle G, X \rangle(w))$.
We get a representation of $\mO_{G_{\max}}$ on $L^2(\langle G, X \rangle(w))$ thanks to the universality of $\mO_{G_{\max}}$.
We write $\pi_w$ for this representation.
\end{note}

We will recall the following result due to Nekrashevych.

\begin{thm}\textup{(\cite[Theorem 3.3]{Nek2})}
Let $\rh$ be a unital representation of $\mO_{G_{\max}}$ on a Hilbert space.
Then for any $w \in \Ggen ^\text{S}$ and $a \in \mO_{G_{\max}}$,
we have $\| \pi_w(a) \| \leq \| \rh(a) \|$.
\end{thm}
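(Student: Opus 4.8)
The goal is to show that $\pi_w$ is the smallest possible unital representation of $\mO_{G_{\max}}$ in norm, for any strictly $G$-generic $w$. Since $\mO_{G_{\max}}$ is a Cuntz--Pimsner-type algebra with a gauge action, the natural tool is a gauge-invariant uniqueness argument: once we know $\pi_w$ is faithful on the fixed-point (core) algebra and is compatible with the gauge action (in the sense of admitting a conditional expectation realized spatially), the estimate $\|\pi_w(a)\| \le \|\rho(a)\|$ follows from the universal property.

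The plan is as follows. First I would identify the gauge action $\gamma_z$ ($z \in \mathbb{T}$) on $\mO_{G_{\max}}$ sending $S_x \mapsto zS_x$ and fixing $G$, and note it is implemented on $L^2(\langle G, X\rangle(w))$ by a unitary $U_z$ diagonalized by word-length (acting as $z^{|\text{(length data)}|}$ on the orbit-basis vectors, using that the orbit structure respects the self-similarity rule $gS_x = S_{v(g,x)}h(g,x)$). This gives a faithful conditional expectation $E$ onto the gauge-invariant subalgebra that is spatially implemented in $\pi_w$, and the same averaging gives a faithful expectation on the image of any unital $\rho$. Second, I would reduce the claim to showing $\pi_w$ is isometric on the core algebra: for $a \in \mO_{G_{\max}}$ we have $\|\pi_w(a)\|^2 = \|\pi_w(a^*a)\| \ge \|\pi_w(E(a^*a))\|$ (using the spatial expectation and positivity), and if $\pi_w$ is isometric on the core this equals $\|E(a^*a)\| \le \|\rho(a^*a)\| = \|\rho(a)\|^2$ by the corresponding inequality for $\rho$'s expectation together with faithfulness of $E$ on the universal core (which forces $\|E(a^*a)\| \le \|\rho(E(a^*a))\| = \|\rho(a^*a)\|$ once the core inequality is known; more carefully one inducts through the finite-stage subalgebras).

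The heart of the matter, and the main obstacle, is proving $\pi_w$ is faithful on the core algebra $\mO_{G_{\max}}^\gamma$. The core is an inductive limit of algebras of the form $\bigoplus$ over $u \in X^n$ of $S_u (\text{compactly-supported operators on } \ell^2(\text{orbit}) \otimes \text{copy of } A_{G})S_v^*$-type matrix blocks; concretely one must show that a self-adjoint element $\sum_{|u|=|v|=n} S_u a_{u,v} S_v^*$ (with $a_{u,v} \in \mathbb{C}G$) which is nonzero maps to a nonzero operator under $\pi_w$. This is exactly where strict $G$-genericity of $w$ enters: it guarantees that for any $f = T_{u_1} g T_{u_2}^* \in \langle G, X\rangle$ with $f$ not defined on $w$, not fixing $w$, or fixing $w$ with a neighborhood, the matrix coefficients $\langle \pi_w(f)\delta_{w_1}, \delta_{w_2}\rangle$ along the orbit separate these operators — one picks an orbit point $w_1$ at which the relevant group elements act without unexpected coincidences, using that $X^\om \setminus (X^\om)_{f\text{-gen}} \subseteq (X^\om)_f$ and that the strictly generic set is $\langle G, X\rangle$-invariant and dense. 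I would make this precise by choosing, for a given nonzero core element supported on level $n$, a vector $\delta_{uw}$ with $u \in X^n$ so that the diagonal block $a_{u,u}$ acts on the translated generic point $w$ via the faithful permutation representation defining $A_{G_{\min}}$, and then arguing the off-diagonal contributions land in orthogonal subspaces. Establishing that these orthogonality and faithfulness claims hold simultaneously across all levels — i.e. that the finite-level estimates are compatible in the inductive limit — is the delicate point; the genericity hypothesis is precisely what rules out the degenerate cancellations that would otherwise obstruct it.
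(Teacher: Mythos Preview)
The paper does not supply its own proof of this statement; it is quoted from Nekrashevych. That said, your proposed argument has a genuine gap.

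Your strategy is gauge-invariant uniqueness: implement the gauge action spatially in $\pi_w$, show $\pi_w$ is isometric on the core, and conclude. But gauge-invariant uniqueness is a device for proving that a representation is \emph{faithful} on $\mO_{G_{\max}}$, i.e.\ realizes the \emph{maximal} norm. The theorem asserts the opposite: $\pi_w$ gives the \emph{minimal} norm among unital representations. If your plan succeeded it would yield $\|\pi_w(a)\| = \|a\|_{\mO_{G_{\max}}} \ge \|\rho(a)\|$, the reverse inequality. This is already visible in your chain $\|\pi_w(a)\|^2 \ge \|\pi_w(E(a^*a))\| = \|E(a^*a)\| \le \|\rho(a)\|^2$, which has the shape $A \ge B \le C$ and does not imply $A \le C$. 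The parenthetical step $\|E(a^*a)\| \le \|\rho(E(a^*a))\|$ is also backwards, since every representation is contractive. Finally, the claim that $\pi_w$ is isometric on the core of $\mO_{G_{\max}}$ is in general false: by construction $\pi_w(\C G)$ closes up to $A_{G_{\min}}$, which need not coincide with the closure of $\C G$ in $\mO_{G_{\max}}$---this is exactly the distinction between $\mO_{G_{\min}}$ and $\mO_{G_{\max}}$ that the paper is set up to study.

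Nekrashevych's argument runs in the opposite direction: inside an \emph{arbitrary} unital representation $\rho$ on $H$ one manufactures an approximate copy of $\pi_w$. Fix a unit vector $\xi \in H$ and set $\xi_n := \rho(S_{w^{(n)}})\xi$. For a finite sum $a = \sum_i c_i\, S_{u_i} g_i S_{v_i}^*$ one has $\rho(a)\xi_n = \sum_i c_i\, \rho(S_{u_i} g_i S_{v_i}^* S_{w^{(n)}})\xi$; once $n$ exceeds every $|v_i|$, each term is either zero (when $v_i$ is not a prefix of $w$) or of the form $\rho(S_{u_i'} h_i)\xi$ for an explicit word $u_i'$ and group element $h_i$ produced by self-similarity. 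Strict $G$-genericity of $w$ guarantees that, for large $n$, two such terms are orthogonal in $H$ if and only if the corresponding orbit points $T_{u_i} g_i T_{v_i}^*(w)$ are distinct, because any unexpected coincidence would force some element of $\langle G, X\rangle$ to fix $w$ without fixing a neighborhood. Hence the inner products $\langle \rho(a)\xi_n, \rho(b)\xi_n\rangle$ eventually match $\langle \pi_w(a)\delta_w, \pi_w(b)\delta_w\rangle$. Since $\delta_w$ is cyclic for $\pi_w$, this yields $\|\pi_w(a)\| \le \|\rho(a)\|$. The point is that one embeds (asymptotically) the permutation picture into $\rho$, rather than trying to show that $\pi_w$ is itself faithful.
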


The above theorem implies that the notation $\mO_{G_\text{min}} :=  \pi_w(\mO_{G_{\max}})$
does not depend on the choice of $w \in \Ggen ^\text{S}$.
Let $A_{G_{\min}}$ be the completion of $\C G$ with respect to the C$^*$-norm on $\mO_{G_{\min}}$.
If $G$ is an amenable group,
$\mO_{G_{\min}}$ and $\mO_{G_{\max}}$ are isomorphic in some cases.
See \cite{CE, Nek2} for example.
To understand $\mO_{G_{\min}}$ we consider another algebra from self-similar actions.
The following definition is the almost same one as \cite[Definition 6.1]{Nek}.

\begin{defn}(\cite[Definition 6.1]{Nek})
Let $\mO_{G_{\min}}'$ be the univesal \AL generated by $A_{G_{\min}}$ and $\{ S_x \ | \ x \in X \}$ satisfying equations (\ref{isometry}), (\ref{Cuntz}) and the following relation for any $a \in A_{G_{\min}}$ and $x, y \in X$:
\begin{equation}
aS_x = \sum_{y \in X}S_y\langle y, a\cdot x\rangle,
\label{inner}
\end{equation}
\end{defn} 

The same argument as \cite[Theorem 8.3]{Nek} implies the simplicity of $\mO_{G_{\min}}$.  
\begin{thm}\textup{(\cite[Theorem 8.3]{Nek})}\label{simple}
$\mO_{G_{\min}}'$ is unital, purely infinite and simple.
\end{thm}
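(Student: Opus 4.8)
The plan is to adapt Nekrashevych's proof of \cite[Theorem 8.3]{Nek} to the present setting, where the coefficient algebra $A_{G_{\min}}$ has been replaced by a possibly non-amenable completion of $\C G$. The structure of the argument is standard for Cuntz--Pimsner algebras over a unital base: one shows that $\mO_{G_{\min}}'$ carries a gauge action of the circle, identifies a faithful conditional expectation onto the fixed-point algebra, and then uses the Cuntz-relation structure $\sum_x S_xS_x^* = 1$ together with \eqref{inner} to prove that every nonzero positive element can be scaled, by compressions $S_u^* \,\cdot\, S_v$ and multiplication by elements of $A_{G_{\min}}$, to dominate a nonzero projection. Purely infinite simplicity then follows from the usual criterion.

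First I would set up the gauge action $\gamma\colon \mathbb{T} \acts \mO_{G_{\min}}'$ determined by $\gamma_z(S_x) = zS_x$ and $\gamma_z(a) = a$ for $a \in A_{G_{\min}}$, which exists by universality. Averaging over $\mathbb{T}$ gives a faithful conditional expectation $E$ onto the fixed-point algebra $\mathcal{F}$, and $\mathcal{F}$ is the closed union of the finite-dimensional-over-$A_{G_{\min}}$ pieces $\mathcal{F}_n = \overline{\operatorname{span}}\{ S_u a S_v^* \mid u,v \in X^n, a \in A_{G_{\min}}\}$. The key point here is that \eqref{inner} lets one rewrite any word $S_{u_1} g S_{u_2}^*$ in a normal form, so the core $\mathcal{F}$ is well understood; each $\mathcal{F}_n$ is a ``matrix algebra over $A_{G_{\min}}$'' in the appropriate sense, and $A_{G_{\min}}$ itself, being generated by a single orbit-representation, has no obstruction to the compression arguments.

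Next I would show the local characterization: for a nonzero positive $b \in \mO_{G_{\min}}'$, after scaling we may assume $\|E(b)\| = 1$, approximate $b$ by a finite sum of monomials $S_u a S_v^*$, and use a standard estimate to find $u_0 \in X^n$ (for large $n$) with $S_{u_0}^* b S_{u_0}$ close to an element of $A_{G_{\min}}$ of norm near $1$; here one invokes that $A_{G_{\min}}$ is a quotient of $\C G$ acting on $\ell^2$ of a $G$-generic orbit, so multiplying by a suitable element of $A_{G_{\min}}$ and compressing by a further $S_w$ produces something within $\tfrac12$ of a nonzero projection, hence invertible in a corner. This yields that $x b y$ is a nonzero projection for suitable $x,y$, giving pure infiniteness; simplicity then follows since any nonzero ideal, being gauge-invariant after averaging or absorbing the projection just produced, must contain $1$.

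The main obstacle I expect is verifying that the compression argument genuinely goes through with $A_{G_{\min}}$ in place of $\C G$ or its reduced completion --- specifically, that an arbitrary nonzero positive element of $A_{G_{\min}}$ can be moved close to a projection by the available operations. In Nekrashevych's original argument one exploits the explicit orbit structure of $G$ acting on $X^\om$; I would need to check that the defining representation $\pi_w$ of $A_{G_{\min}}$ on $L^2(\langle G, X\rangle(w))$ retains enough of that structure (in particular that the cyclic vectors $\delta_{w_1}$ are separated by the $S_u$) to run the same estimates. Provided that is in place --- and the remark that $\langle G, X\rangle$-orbits of strictly $G$-generic points behave like the orbit used in \cite{Nek} strongly suggests it is --- the proof is a routine transcription, which is presumably why the statement is phrased as ``the same argument as \cite[Theorem 8.3]{Nek}.''
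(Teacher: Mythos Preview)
The paper gives no detailed proof beyond the sentence preceding the theorem, which says that the argument of \cite[Theorem 8.3]{Nek} carries over, and your sketch is exactly that argument: gauge action, faithful expectation onto the core $\mathcal{F}=\overline{\bigcup_n \mathcal{F}_n}$, compression by $S_u^*(\,\cdot\,)S_u$ into $A_{G_{\min}}$, then cutting down near a projection using the orbit representation. The one point you flag as the possible obstacle is indeed the only place care is needed, and it goes through precisely because $A_{G_{\min}}$ is by definition the image of $\C G$ under $\pi_w$ on $L^2(\langle G,X\rangle(w))$ for a strictly $G$-generic $w$, so Nekrashevych's estimates transfer verbatim.
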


Next we observe a Hilbert $A_{G_\text{min}}$-bimodule.
Write $\Phi :=\{ \sum_{\{ x \in X \}} S_xa_x \ | \ a_x \in A_{G_\text{min}} \}$.
We regard this as a right module over the $A_{G_\text{min}}$ with the basis $X$.
Define an $A_{G_\text{min}}$-valued inner product on $\Phi$ by
\[
\langle \sum_{x \in X}S_x a_x \  , 
\sum_{x \in X }S_xb_x \rangle
:= \sum_{x \in X }a_x^* b_x
\]
where $a_x ,
b_x \in A_{G_\text{min}}$ for any $x \in X$. 

We can construct left module structure from self-similarity.
It is given by
\[
g( \sum_{x \in X }S_x a_x ):= \sum_{x \in X}S_{v(g, x)} (h(g, x)a_x)
\]
where $g \in G$ and $a_x \in A_{G_\text{min}}$ for any $x \in X$.
Extending the above definition, we get a $A_{G_\text{min}}$-bimodule.
Indeed let $\phi$ be the *-homomorphism from $A_{G_\text{min}}$ to the set of adjointable maps on $\Phi$ given above then we can check $\phi$ is injective by using the definition of $A_{G_\text{min}}$.
We also use the letter $\Phi$ for this Hilbert $A_{G_\text{min}}$-bimodule.

Actually $\mO_{G_{\min}}'$ is isomorphic to the Cuntz--Pimsner algebra generated from a Hilbert $A_{G_{\min}}$-bimodule $\Phi$ and also isomorphic to $\mO_{G_\text{min}}$.
Indeed,
we can construct universal surjections from $\mO_{G_{\min}}'$ onto $\mO_{G_{\min}}$ and the Cuntz--Pmisner algebra and the surjections are also injective by the simplicity from Theorem \ref{simple}. 
Hence $\mO_{G_\text{min}}'$ is isomorphic to $\mO_{G_\text{min}}$ and a Cuntz--Pimsner algebra generated from a Hilbert $A_{G_{\min}}$-bimodule.

Let $\Ga$ be the canonical gauge action of $\R$ on $\mO_{G_{\min}}$ or $\mO_{G_{\max}}$ given by 
\[
\Ga_t(g):=g  \ \textrm{and}  \ \Ga_t(S_x):=\exp(it)S_x
\]
where $g \in G$,
$t \in \R$ and $x \in X$.
Let $K_\be(\mO_{G_{\min}})$ and $K_\be(\mO_{G_{\max}})$ be the sets of KMS states corresponding to $\Ga$ with inverse temperature $\be > 0$ on $\mO_{G_{\min}}$ and $\mO_{G_{\max}}$,
respectively.
Note that if $\vph \in K_\be(\mO_{G_{\min}})$ (or $K_\be(\mO_{G_{\min}})$),
then 
\[
\vph(S_{u_1} g S_{u_2}^*) = \exp(-\be|u_1|)\phi(g S_{u_2}^*S_{u_1})
\] 
for any $g \in G$ and $u_1, u_2 \in X^*$. 
Combining the above and equation (\ref{Cuntz}),
we have
\[
1 = \vph  (\sum_{x \in X} S_xS_x^* ) = |X|\exp(-\be).
\]
Hence there is no KMS state for $\Ga$ if $\be \neq \log |X|$.
We consider only the case $\be = \log |X|$. 
Let us recall the notation that
$G$ is a countable group which acts self-similarly on $X^\om$ and $e$ is the unit of $G$. 

\begin{defn}[Pre-KMS function]
A positive definite function $\vph$ on $G$ is called a \textit{pre-KMS function}
if
with $\vph(e) = 1$ and
\begin{equation}
\label{pre}
\vph (g) = |X|^{-1}\sum_{x \in X} \de_{x, v(g, x)}\vph(h(g, x))
\end{equation}
for any $g \in G$.
\end{defn}   

\begin{rem}
The restriction of every $\vph \in K_{\log |X|}(\mO_{G_{\min}})$ (or $K_{\log |X|}(\mO_{G_{\max}})$) to $G$ is a pre-KMS function.
Indeed, 
consider the following equation:
\[
 g = g\sum_{x \in X}S_xS_x^* = \sum_{x \in X}S_{v(g, x)}h(g, x)S_x^*
\]
and use the KMS condition of $\vph$,
then we have the equation (\ref{pre}).
\end{rem}

From the above remark,
we should look for pre-KMS functions to find KMS states.
Lemma \ref{gen2} implies that $g \mapsto \mu(\gfix)$ and $g \mapsto \mu(\gfix \cap \ggen)$ are pre-KMS functions on $G$.
If $\mu(\Ggen) = 1$,
then $\mu(\gfix) = \mu(\gfix \cap \ggen)$ by Lemma \ref{gen}.
Actually,
there exists a unique pre-KMS function on $G$ in case $\mu(\Ggen) = 1$. 
The following proposition was already proved in \cite[Proposition 8.3]{BL} but for the reader's convenience we prove here without using more general terminology in \cite{BL}.
 
\begin{prop}\label{main}
If $\mu(\Ggen) = 1$ and $\vph$ is a pre-KMS function on $G$,
then $\vph(g) = \mu(\gfix)$ for any $g \in G$.
\end{prop}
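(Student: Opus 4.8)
The plan is to iterate the defining identity \eqref{pre} and then pass to the limit using Lemma~\ref{gen3}. First I would record the cocycle identities $v(g,uu')=v(g,u)\,v(h(g,u),u')$ and $h(g,uu')=h(h(g,u),u')$, obtained by applying \eqref{selfsim} one letter at a time; in particular these give $\de_{uu',\,v(g,uu')}=\de_{u,\,v(g,u)}\,\de_{u',\,v(h(g,u),u')}$. Substituting \eqref{pre} into itself $n$ times and using this factorization of the Kronecker factor, one obtains, for every $n\in\N$ and every $g\in G$,
\begin{equation}
\label{preiter}
\vph(g)=|X|^{-n}\sum_{u\in X^n}\de_{u,\,v(g,u)}\,\vph(h(g,u)).
\end{equation}

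Next, since $\vph$ is positive definite with $\vph(e)=1$, positivity of the $2\times2$ Gram matrix $[\vph(g_i^{-1}g_j)]$ for $g_1=e$, $g_2=g$ gives the standard bound $|\vph(g)|\le\vph(e)=1$ for all $g\in G$. I would then split the sum in \eqref{preiter} according to whether $h(g,u)=e$ or $h(g,u)\neq e$:
\begin{equation}
\label{presplit}
\vph(g)=|X|^{-n}\bigl|\{\,u\in X^n\mid v(g,u)=u,\ h(g,u)=e\,\}\bigr|+|X|^{-n}\sum_{\substack{u\in X^n\\ v(g,u)=u,\ h(g,u)\neq e}}\vph(h(g,u)).
\end{equation}
The last sum is bounded in modulus by $|X|^{-n}\,|\{u\in X^n\mid v(g,u)=u,\ h(g,u)\neq e\}|$, which by the second formula of Lemma~\ref{gen3} tends to $\mu(X^\om\backslash\ggen)$; and $\mu(X^\om\backslash\ggen)=0$ because $\mu(\Ggen)=1$ forces $\mu(\ggen)=1$ by Lemma~\ref{gen}. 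Hence the last term of \eqref{presplit} tends to $0$, while the first term, which equals $|X|^{-n}|\{u\in X^n\mid v(g,u)=u\}|$ minus that same vanishing count, tends to $\mu(\gfix)$ by the first formula of Lemma~\ref{gen3}. Letting $n\to\infty$ in \eqref{presplit} therefore yields $\vph(g)=\mu(\gfix)$.

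I do not anticipate a genuine obstacle here: the argument reduces to the two asymptotic counting formulas of Lemma~\ref{gen3} together with the elementary inequality $|\vph|\le1$. The only point requiring a little care is the justification of the iterated relation \eqref{preiter}, i.e. verifying that the factor $\de_{u,v(g,u)}$ behaves multiplicatively under concatenation of words; this is precisely the self-similarity cocycle identity and is purely mechanical. It is also worth noting that this computation establishes existence and uniqueness simultaneously, since Lemma~\ref{gen2} already exhibits $g\mapsto\mu(\gfix)$ as a pre-KMS function and the above shows it is the only one.
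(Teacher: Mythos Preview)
Your argument is correct and is essentially the paper's own proof: iterate the pre-KMS identity to level $n$, split according to $h(g,u)=e$ or not, bound the remainder using $|\vph|\le 1$, and pass to the limit via Lemma~\ref{gen3} together with Lemma~\ref{gen}. The only cosmetic differences are that you spell out the cocycle identities for $v$ and $h$ to justify \eqref{preiter} and obtain $|\vph|\le 1$ from a $2\times 2$ Gram matrix rather than invoking extension to a state on the full group $C^*$-algebra; the paper phrases the limit of the first term as $\mu(\gfix\cap\ggen)$ and then uses $\mu(\Ggen)=1$ to identify it with $\mu(\gfix)$, whereas you combine both formulas of Lemma~\ref{gen3} directly.
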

\begin{proof}
For any $n \in \N$ and $g \in G$,
we have 
\[
\vph(g) = |X|^{-n} \ | \ \{ u \in X^n| h(g, u) =e, v(g, u) = u \} | + |X|^{-n} \sum_{ \{ u \in X^n| h(g, u) \neq e \} }\de_{u, v(g, u)}\vph(h(g, u))
\]
from the pre-KMS condition.
Any positive definite function extends to a state on full group algebra and therefore $|\vph(g)| \leq 1$ for any $g \in G$. 
Hence,
\[
| (\vph(g) - |X|^{-n} | \{ u \in X^n \ | \ h(g, u) =e, v(g, u) = u \} |) \ | \leq |X|^{-n} | \{ u \in X^n  \ | \ h(g, u) \neq e, v(g, u) = u \} |.
\]
By Lemma \ref{gen3} and taking limit,
we have 
\[
| \vph(g) - \mu(\gfix \cap \ggen ) | \leq \mu(X^\om \backslash \ggen ).
\]  
The right side of the above inequality is $0$ by the assumption and Lemma \ref{gen}.
Hence 
\[
\vph(g) = \mu(\gfix \cap \ggen )
=\mu(\gfix).
\]
\end{proof}
  
Put $\psi_0(g) := \mu(\gfix)$ for $g \in G$.
As Proposition \ref{main} shows, 
$\psi_0$ is the unique pre-KMS function on $G$ in case $\mu(\Ggen)=1$.
Let $\psi$ be a linear functional on the $*$-algebra generated by $\{ S_x \}_{x \in X}$ and $G$ given by 
\[
\psi(S_{u_1}gS_{u_2}^*) = \de_{u_1, u_2} |X|^{-|u_1|}\psi_0(g)
\]
where $g \in G, u_1, u_2 \in X^*$.
We see that $\psi_0$ extends to the unique KMS state.
To show that $\psi_0$ is positive definite,
we need the next lemma.
The lemma might be proven somewhere as a corollary of \cite[Theorem 2]{FM} but for the reader's convenience we give a proof.

\begin{lem}\label{fixed}
Let $Y$ be a Hausdorff space and $\nu$ be a Borel probability measure on $Y$.
Assume that a countable group $G$ acts faithfully on $Y$ (assume that each $g \in G$ is a measure preserving homeomorphism) and $Y_g$ denotes the set of fixed points of $g \in G$.
Then the map $g \mapsto \nu(Y_g)$ is a positive definite function. 
\end{lem}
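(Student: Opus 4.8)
The plan is to realize $g\mapsto\nu(Y_g)$ as an integral, over $y\in Y$, of a family of pointwise positive definite functions coming from the permutation action of $G$ on orbits. Recall that to say the function $g\mapsto\nu(Y_g)$ is positive definite means: for every finite family $g_1,\dots,g_n\in G$ and every $c_1,\dots,c_n\in\C$ one has $\sum_{i,j}\overline{c_i}\,c_j\,\nu(Y_{g_i^{-1}g_j})\geq 0$; since $Y_g=Y_{g^{-1}}$, the convention for which product of $g_i$'s appears is irrelevant.

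First I would dispose of measurability: for each $g\in G$ the map $y\mapsto(y,g(y))$ is continuous because $g$ is a homeomorphism, and the diagonal of $Y\times Y$ is closed because $Y$ is Hausdorff, so $Y_g$ is closed, hence Borel, and $\nu(Y_g)\in[0,1]$ is well defined. The key point is then the elementary identity
\[
Y_{g_i^{-1}g_j}=\{\,y\in Y \mid g_i(y)=g_j(y)\,\},
\]
so that for a fixed $y$ the value $\mathbf{1}_{Y_{g_i^{-1}g_j}}(y)$ is exactly the Kronecker symbol $\langle\de_{g_i(y)},\de_{g_j(y)}\rangle$ computed in $\ell^2(Gy)$ (the orbit $Gy$ being countable; equivalently one may work in the finite-dimensional span of $\de_{g_1(y)},\dots,\de_{g_n(y)}$). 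Consequently, for each $y\in Y$,
\[
\sum_{i,j}\overline{c_i}\,c_j\,\mathbf{1}_{Y_{g_i^{-1}g_j}}(y)\;=\;\Bigl\|\sum_{i=1}^n c_i\,\de_{g_i(y)}\Bigr\|^2\;\geq\;0 .
\]
Each $Y_{g_i^{-1}g_j}$ is Borel, so $y\mapsto\sum_{i,j}\overline{c_i}\,c_j\,\mathbf{1}_{Y_{g_i^{-1}g_j}}(y)$ is a bounded Borel function; integrating the last inequality against $\nu$ and using $\nu(Y_{g_i^{-1}g_j})=\int_Y\mathbf{1}_{Y_{g_i^{-1}g_j}}\,d\nu$ gives
\[
\sum_{i,j}\overline{c_i}\,c_j\,\nu\bigl(Y_{g_i^{-1}g_j}\bigr)\;=\;\int_Y\Bigl\|\sum_{i=1}^n c_i\,\de_{g_i(y)}\Bigr\|^2\,d\nu(y)\;\geq\;0 .
\]
Together with $\nu(Y_e)=\nu(Y)=1$, this shows that $g\mapsto\nu(Y_g)$ is a normalized positive definite function.

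This argument has essentially no obstacle; the only items deserving a word of care are the measurability of $Y_g$ (which is where the Hausdorff hypothesis enters) and the bookkeeping that upgrades the pointwise Gram-matrix positivity to positivity after integration against the finite measure $\nu$. I would remark in passing that faithfulness and the measure-preservation of the $G$-action are not needed for this particular statement — they are part of the ambient setup — which is why a direct proof is shorter than deducing it from \cite[Theorem 2]{FM}.
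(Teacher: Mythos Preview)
Your argument is correct. Both your proof and the paper's rest on the same core observation, namely that $\nu(Y_{g^{-1}h})=\int_Y \mathbf{1}_{\{g(\cdot)=h(\cdot)\}}\,d\nu$ and that the resulting matrix is an integral of pointwise Gram matrices; the difference is only in packaging. The paper routes the computation through the orbit equivalence relation $R\subset Y\times Y$ and the Feldman--Moore left counting measure $\nu_l$, writing the quadratic form as $\int |f'|^2\,d\nu_l$ for the simple function $f'=\sum_g \alpha_g\,1_{\mathrm{Graph}(g)}$ on $R$. You bypass this apparatus entirely, realizing the same integrand as $\|\sum_i c_i\,\delta_{g_i(y)}\|^2$ in $\ell^2(Gy)$ and integrating directly over $(Y,\nu)$. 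Your version is more self-contained (no reference to \cite{FM} is needed), and your remarks that the Hausdorff hypothesis is what makes $Y_g$ Borel and that faithfulness and measure-preservation play no role here are accurate and worth keeping.
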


\begin{proof}
Let $R := \{ (x, y) \in Y^2 \ | \ y=g(x) \ \text{for some} \ g \in G \}$.
Then $R$ defines an equivalence relation on $Y$.
For $g \in G$,
we define $\text{Graph}(g):=\{ (x, g(x) ) \ | \ x \in Y\} \subset R$.
Moreover,
 consider the projection $\pi_l \colon Y^2 \to Y$ given by $\pi_l((x, y)) := x$. 
We denote by $\nu_l$ the left counting measure (see \cite[Theorem 2]{FM}).
For any $g, h \in G$,
we have 
\[
\nu_l(\text{Graph}(g) \cap \text{Graph}(h) ) = \int_Y |\pi_l(x)^{-1} \cap \text{Graph}(g) \cap \text{Graph}(h)| d\nu
=\nu((Y)_{g^{-1}h}).
\]      
Take $f \in \C G$ and write $f = \sum_{g \in F} \al_g g$ where $F$ is a finite subset of $G$.
We construct a simple function $f' := \sum_{g \in F}\al_g 1_\text{Graph(g)}$.
Then
\[
\sum_{g, h \in F} \overline{\al_g}\al_h\nu(Y_{g^{-1}h})
=\int_Y |f'|^2 d\nu_l \geq 0.
\] 
Thus we have finished the proof.  
\end{proof} 

Before the proof of the uniqueness of KMS state,
we recall the following fact. 
\begin{lem}\textup{(\cite[Lemma3.2]{PW})}
Let $Y$ be a finite dimensional full Hilbert C$^*$-bimodule over $A$ and $\ga >0$.
Take a tracial state $\tau$ on $A$ with
\[
\tau(\sum_{y \in Y} \langle y, ay \rangle ) = \ga \tau(a)
\]
for any $a \in A$.
Then $\tau$ extends to a unique KMS state with inverse temperature $\log \ga $ on the Cuntz--Pimsner algebra $\mO_Y$ over $Y$.  
\end{lem}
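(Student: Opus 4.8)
The plan is the standard analysis of KMS states on a Cuntz--Pimsner algebra through its $\Z$-grading. Recall that $\mO_Y$ is generated by $A$ and creation operators $S_\xi$ ($\xi\in Y$), subject to $S_\xi^*S_\eta=\langle\xi,\eta\rangle$, $aS_\xi=S_{\phi(a)\xi}$, $S_\xi a=S_{\xi a}$, and the covariance relation $\sum_i S_{e_i}S_{e_i}^*=\phi(1)$, where $\{e_i\}$ is a finite Parseval frame of $Y$ (which exists because $Y$ is finite dimensional) and $\phi\colon A\to\mathcal K(Y)$ is the left action, assumed injective; finite dimensionality and fullness make this the honest relation, with Katsura's ideal equal to $A$. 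Writing $S_\xi$ for the obvious word when $\xi\in Y^{\otimes k}$ (with $Y^{\otimes 0}=A$), one has $\mO_Y=\overline{\operatorname{span}}\{S_\xi S_\eta^*\}$; the gauge action $(\si_t)_{t\in\R}$ given by $\si_t|_A=\id$ and $\si_t(S_\xi)=e^{it}S_\xi$ grades $\mO_Y$ over $\Z$, with core $\mathcal F=\overline{\bigcup_k\mathcal F_k}$, where $\mathcal F_k=\overline{\operatorname{span}}\{S_\xi S_\eta^*:\xi,\eta\in Y^{\otimes k}\}\cong\mathcal K(Y^{\otimes k})$, the connecting map is $S_\xi S_\eta^*\mapsto\sum_i S_{\xi\otimes e_i}S_{\eta\otimes e_i}^*$, and averaging $\si$ gives a faithful conditional expectation $E\colon\mO_Y\to\mathcal F$.

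First I would settle uniqueness. A KMS$_{\log\ga}$ state $\vph$ is $\si$-invariant, so $\vph=\vph\circ E$ and hence $\vph$ is determined by $\vph|_{\mathcal F}$; since elements of $\mathcal F$ are fixed by $\si_z$ for every $z$, $\vph|_{\mathcal F}$ is a trace, so $\tau':=\vph|_A$ is a trace on $A$. Applying the KMS relation to $S_\xi\cdot S_\eta^*$ for $\xi,\eta\in Y^{\otimes k}$, together with $\si_{i\log\ga}(S_\xi)=\ga^{-k}S_\xi$, gives $\vph(S_\xi S_\eta^*)=\ga^{-k}\vph(S_\eta^*S_\xi)=\ga^{-k}\tau'(\langle\eta,\xi\rangle)$, so $\vph$ is completely determined by $\tau'$. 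Compatibility of this formula with the connecting map forces $\ga\,\tau'(\langle\eta,\xi\rangle)=\tau'(\sum_i\langle e_i,\langle\eta,\xi\rangle e_i\rangle)$, and fullness of $Y$ makes $\{\langle\eta,\xi\rangle\}$ dense in $A$, so $\tau'$ satisfies the scaling equation. Thus there is at most one KMS$_{\log\ga}$ state, and its restriction to $A$ must be the given $\tau$.

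For existence I would run this in reverse. Given the tracial state $\tau$ with $\tau(\sum_i\langle e_i,ae_i\rangle)=\ga\,\tau(a)$, let $\operatorname{Tr}^{(k)}_\tau$ be the canonical finite trace on $\mathcal K(Y^{\otimes k})$ induced by $\tau$ through a Parseval frame of $Y^{\otimes k}$ (positive, tracial, independent of the frame), and set $\vph_k:=\ga^{-k}\operatorname{Tr}^{(k)}_\tau$. Iterating the scaling equation says precisely that $\vph_{k+1}$ restricts to $\vph_k$ along the connecting inclusion, so the $\vph_k$ assemble into a bounded trace $\tilde\tau$ on $\mathcal F=\varinjlim\mathcal F_k$, normalised because $\vph_0=\tau$ is a state. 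Put $\vph:=\tilde\tau\circ E$, a state on $\mO_Y$; it remains to check the KMS condition. Since $\operatorname{span}\{S_\xi S_\eta^*\}$ is a dense $\si$-invariant $*$-subalgebra consisting of $\si$-analytic elements, it suffices to verify $\vph(ab)=\vph(b\,\si_{i\log\ga}(a))$ for $a,b$ of that form, and expanding both sides with the defining relations and the traciality of $\tilde\tau$ reduces the identity, once again, to the scaling equation. This yields the KMS$_{\log\ga}$ state extending $\tau$, and with the previous step the uniqueness.

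The only step that is not bookkeeping — and the part I expect to need genuine care — is this last verification that $\vph=\tilde\tau\circ E$ is KMS: one must see that the single scalar constraint $\ga\,\tau(a)=\tau(\sum_i\langle e_i,ae_i\rangle)$ globalises to the full KMS identity on $\mO_Y$. The three hypotheses are exactly what makes this work: finite dimensionality supplies a finite frame and the covariance relation with no Katsura-ideal subtleties, fullness propagates the constraint from the inner products $\langle\eta,\xi\rangle$ to all of $A$, and traciality of $\tau$ makes each $\operatorname{Tr}^{(k)}_\tau$ a well-defined trace, so that the $\vph_k$ are compatible.
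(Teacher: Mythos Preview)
The paper does not give its own proof of this lemma; it is quoted from \cite[Lemma~3.2]{PW} and used as a black box in Theorem~\ref{main2}. So there is nothing in the paper to compare against. Your sketch is the standard argument, and it is essentially what \cite{PW} does: pass to the core via the gauge-invariant conditional expectation, use the KMS relation to pin down the state on each $\mathcal F_k$ in terms of its restriction to $A$, and build the extension as an inductive limit of rescaled dual traces compatible under the scaling identity.

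One overclaim to fix. Your uniqueness paragraph concludes ``there is at most one KMS$_{\log\ga}$ state, and its restriction to $A$ must be the given $\tau$''. What your argument actually establishes is that any KMS$_{\log\ga}$ state is determined by its restriction $\tau'$ to $A$, and that $\tau'$ satisfies the scaling identity. It does \emph{not} force $\tau'=\tau$: nothing you wrote rules out the existence of several tracial states on $A$ satisfying the scaling identity, each producing its own KMS state. The lemma as stated only claims that $\tau$ extends to a unique KMS state, i.e., uniqueness among extensions of the given $\tau$; and indeed the paper proves global uniqueness of the KMS state separately, by showing in Proposition~\ref{main} that the scaling identity has a unique solution when $\mu(\Ggen)=1$. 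With that final sentence corrected, your outline is sound.
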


\begin{thm}\label{main2}
If $\mu(\Ggen)=1$,
the linear functional $\psi$ extends to a unique KMS state $\psi_{\min}$ in $K_{\log|X|}(\mO_{G_{\min}})$. 
\end{thm}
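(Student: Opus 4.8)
The plan is to produce a tracial state $\tau$ on the coefficient algebra $A_{G_{\min}}$ and then feed it into the lemma recalled above (\cite[Lemma 3.2]{PW}). Recall that $\mO_{G_{\min}}\cong\mO_{G_{\min}}'\cong\mO_\Phi$, the Cuntz--Pimsner algebra of the finite-dimensional full Hilbert $A_{G_{\min}}$-bimodule $\Phi$, with basis $\{S_x\}_{x\in X}$ (so $\dim\Phi=|X|$), compatibly with the gauge actions. Hence a tracial state $\tau$ on $A_{G_{\min}}$ satisfying $\tau\bigl(\sum_{x\in X}\langle S_x,a\cdot S_x\rangle\bigr)=|X|\,\tau(a)$ for all $a\in A_{G_{\min}}$ will, by \cite[Lemma 3.2]{PW}, extend to a unique KMS state at inverse temperature $\log|X|$ on $\mO_{G_{\min}}$; identifying that extension with $\psi$ and checking minimality will finish the proof.

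First I would record the basic properties of $\psi_0(g):=\mu(\gfix)$. Each $g\in G$ preserves $\mu$, because by self-similarity $g$ carries the cylinder $uX^\om$ onto $v(g,u)X^\om$ and $u\mapsto v(g,u)$ is a permutation of $X^n$, so $g$ merely permutes the length-$n$ cylinders; thus Lemma \ref{fixed}, applied to $G\acts(X^\om,\mu)$, shows $\psi_0$ is positive definite, and clearly $\psi_0(e)=1$. Moreover $\psi_0$ is tracial, since for $g,h\in G$ the homeomorphism $h$ restricts to a $\mu$-preserving bijection of $(X^\om)_{gh}$ onto $(X^\om)_{hg}$, whence $\psi_0(gh)=\psi_0(hg)$. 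So $\psi_0$ defines a tracial state on the full group C$^*$-algebra $C^*(G)$.

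The crux — and the step I expect to be the main obstacle — is to show this state descends to $A_{G_{\min}}$; this is where $\mu(\Ggen)=1$ (equivalently $\mu(\Ggen ^\text{S})=1$, by Lemma \ref{gen}) is really used. Write $\psi_0(g)=\int_{X^\om}1_{\{g(w_0)=w_0\}}\,d\mu(w_0)$, and note that for a strictly $G$-generic $w_0$ the integrand is the diagonal matrix coefficient $\langle\delta_{w_0},\pi_{w_0}(g)\delta_{w_0}\rangle$. Given a finite $F\subset G$, every positive-measure cell $C$ of the finite Borel partition of $X^\om$ generated by $\{\gfix\}_{g\in F}$ meets $\Ggen ^\text{S}$ (as $\mu(\Ggen ^\text{S})=1$); choosing $w_C\in C\cap\Ggen ^\text{S}$ and setting $\nu_F:=\sum_C\mu(C)\,\delta_{w_C}$ gives a finitely supported probability measure on $\Ggen ^\text{S}$ with $\nu_F(\gfix)=\mu(\gfix)$ for all $g\in F$. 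Hence the positive definite functions $g\mapsto\sum_C\mu(C)\,\langle\delta_{w_C},\pi_{w_C}(g)\delta_{w_C}\rangle$, which are finite sums of matrix coefficients of the representations $\pi_w$ ($w\in\Ggen ^\text{S}$), converge pointwise to $\psi_0$ as $F\uparrow G$. By the standard weak-containment estimate this gives $\|\pi_{\psi_0}(a)\|\le\sup_{w\in\Ggen ^\text{S}}\|\pi_w(a)\|$ for $a\in\C G$; since by Nekrashevych's theorem recalled above $\|\pi_w(a)\|$ does not depend on $w\in\Ggen ^\text{S}$ and equals $\|a\|_{A_{G_{\min}}}$, we conclude $\psi_0$ factors through $A_{G_{\min}}$, defining a state $\tau$ there, which is tracial by density. (An alternative would be to pull the known KMS state on $\mO_{G_{\max}}$ from \cite{LR, BL} back along $\pi_w\colon\mO_{G_{\max}}\to\mO_{G_{\min}}$, but checking it factors through $\pi_w$ is essentially the same difficulty.)

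It then remains to run the Cuntz--Pimsner machinery. The map $a\mapsto\tau\bigl(\sum_{x\in X}\langle S_x,a\cdot S_x\rangle\bigr)$ is a bounded linear functional of $a$, so to see it equals $|X|\,\tau$ it suffices to test on $g\in G$, where $\langle S_x,g\cdot S_x\rangle=\de_{x,v(g,x)}h(g,x)$ by (\ref{similar}); thus the required identity becomes $\sum_{x\in X}\de_{x,v(g,x)}\psi_0(h(g,x))=|X|\,\psi_0(g)$, i.e.\ the pre-KMS equation (\ref{pre}), which holds since $g\mapsto\mu(\gfix)$ is pre-KMS by Lemma \ref{gen2}. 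So \cite[Lemma 3.2]{PW} yields a unique KMS state $\psi_{\min}$ at inverse temperature $\log|X|$ on $\mO_\Phi\cong\mO_{G_{\min}}$ extending $\tau$. Since a KMS state is gauge invariant, $\psi_{\min}$ vanishes on $S_{u_1}gS_{u_2}^*$ unless $|u_1|=|u_2|$, and then the KMS condition gives $\psi_{\min}(S_{u_1}gS_{u_2}^*)=\de_{u_1,u_2}|X|^{-|u_1|}\psi_0(g)$; hence $\psi_{\min}$ extends the functional $\psi$. Finally, for uniqueness in $K_{\log|X|}(\mO_{G_{\min}})$: any such state restricts on $G$ to a pre-KMS function, which must be $\psi_0$ by Proposition \ref{main} (using $\mu(\Ggen)=1$ once more), and the same gauge-invariance and KMS computation then forces agreement with $\psi$ on the dense $*$-subalgebra spanned by the $S_{u_1}gS_{u_2}^*$, so it coincides with $\psi_{\min}$.
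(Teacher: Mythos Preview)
Your proof is correct and follows the same overall strategy as the paper: establish that $\psi_0$ defines a tracial state on $A_{G_{\min}}$, verify the pre-KMS identity to feed into \cite[Lemma 3.2]{PW}, and deduce uniqueness from Proposition~\ref{main}. The only real difference is in the descent of $\psi_0$ to $A_{G_{\min}}$. You approximate $\psi_0$ by convex combinations of diagonal matrix coefficients of the $\pi_w$ (via a partition-and-point-selection argument) and invoke weak containment to obtain $\|\pi_{\psi_0}(a)\|\le\|a\|_{A_{G_{\min}}}$. The paper instead observes directly that
\[
\psi(f^*f)=\int_{X^\om}\langle f(\delta_w),f(\delta_w)\rangle\,d\mu(w)=\int_{\Ggen^{\mathrm S}}\|\pi_w(f)\delta_w\|^2\,d\mu(w)\le\|f\|_{A_{G_{\min}}}^2
\]
for $f\in\C G$, using $\mu(\Ggen^{\mathrm S})=1$ from Lemma~\ref{gen}. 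Your route is a bit more conceptual (it identifies what is happening representation-theoretically), while the paper's is shorter and avoids any Fell-topology machinery; both exploit exactly the same hypothesis in the same way.
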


\begin{proof}
Lemma \ref{fixed} guarantees the positivity of $\psi$.
We see the continuity of $\psi$ on $\C G$ with respect to the norm on $A_{G_{\min}}$.
Take an arbitrary element $f \in \C G$.
Write
\[
f = \sum_{g \in G} \al_g g 
\]
where $\al_g =0$ except for finitely many $g$.
Note that 
\[
\psi(g_1^{-1}g_2) = \mu((X^\om)_{g_1^{-1}g_2}) = \int_{X^\om}\langle g_1(\de_w), g_2(\de_w) \rangle d\mu(w).
\]
Each inner product above is defined on $L^2(\langle G, X \rangle (w))$.
Note that $\mu(\Ggen ^\text{S}) = 1$ by Lemma \ref{gen}.
Hence,
from the definition of $A_{G_{\min}}$ we get 
\begin{equation*}
\begin{split}
\psi(f^*f) 
&= \sum_{g_1, g_2 \in G}\overline{\al_{g_1}}\al_{g_2}\int_{X^\om}\langle g_1(\de_w), g_2(\de_w) \rangle d\mu(w)\\  
&= \int_{X^\om}\langle f(\de_w), f(\de_w) \rangle d\mu(w) \\
&= \int_{\Ggen ^\text{S}}\langle f(\de_w), f(\de_w) \rangle d\mu(w) 
\leq \| f \|_{A_{G_{\min}}}^2.
\end{split}   
\end{equation*}
We have finished proving the continuity and hence $\psi$ defines a tracial state on $A_{G_{\min}}$.
Moreover,
the tracial state satisfies the assumption of the previous lemma and therefore we get a KMS state.
It is easy to see that the KMS state is the extension of $\psi$.

By Proposition \ref{main},
no other tracial state on $A_{G_{\min}}$ satisfies the assumption of the previous lemma.
Thus we have proved the uniqueness part.
\end{proof}
 
The above theorem implies that there exists a KMS state on $\mO_{G_{\max}}$ for the canonical gauge action.
We write $\psi_{\min}$ and $\psi_{\max}$ for the KMS states given by $\psi$ on $\mO_{G_{\min}}$ and $\mO_{G_{\max}}$, respectively.
Note that $\psi_{\min}$ and $\psi_{\min}$ define tracial states on the gauge invariant subalgebras of $\mO_{G_{\min}}$ and $\mO_{G_{\max}}$,
respectively.
We see that they are unique ones.

\begin{thm}\label{trace}
If $\mu(\Ggen)=1$,
then there exists a unique tracial state on the gauge invariant subalgebras of $\mO_{G_{\min}}$ and $\mO_{G_{\max}}$.
\end{thm}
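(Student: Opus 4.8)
The plan is to identify the gauge invariant subalgebras concretely and then transport the uniqueness of the pre-KMS function (Proposition~\ref{main}) to uniqueness of a trace. Recall that for a Cuntz--Pimsner algebra the gauge invariant (fixed point) subalgebra $\mathcal{F}$ is the closure of the increasing union $\bigcup_n \mathcal{F}_n$, where $\mathcal{F}_n$ is spanned by elements $S_{u_1} g S_{u_2}^*$ with $|u_1| = |u_2| = n$ and $g \in G$. The first step is to observe that any tracial state $\tau$ on $\mathcal{F}$ restricts to a positive definite function $\vph$ on $G$ (identify $G$ with the copy inside $\mathcal{F}_0$), and that the trace property together with the relation $g = \sum_x S_{v(g,x)} h(g,x) S_x^*$ from equation~(\ref{similar}) forces $\vph$ to satisfy the pre-KMS identity~(\ref{pre}); this is exactly the computation in the Remark following the definition of pre-KMS function, now run with $\tau$ in place of a KMS state. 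Since $\mu(\Ggen) = 1$, Proposition~\ref{main} then pins down $\vph(g) = \mu(\gfix)$.

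Next I would show that $\vph = \psi_0$ on $G$ already determines $\tau$ on all of $\mathcal{F}$. The key point is that $\mathcal{F}$ is generated, as a C$^*$-algebra, by $G$ together with the Cuntz algebra $\mathcal{O}_{|X|} = C^*(\{S_x\})$, and more precisely every element of $\mathcal{F}_n$ is a finite sum $\sum S_{u_1} g S_{u_2}^*$. Using the trace property, $\tau(S_{u_1} g S_{u_2}^*) = \tau(g S_{u_2}^* S_{u_1})$, and $S_{u_2}^* S_{u_1}$ is either $0$ (if neither word is a prefix of the other) or of the form $g' S_{u}$ or $S_u^* g'$ after applying self-similarity to move group elements past shifts; iterating the relation~(\ref{similar}) and~(\ref{inner}) one reduces $\tau$ on a spanning set to values of $\vph$ on $G$ scaled by powers of $|X|^{-1}$, matching exactly the formula $\psi(S_{u_1} g S_{u_2}^*) = \de_{u_1,u_2}|X|^{-|u_1|}\psi_0(g)$. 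Thus $\tau$ agrees with the restriction of $\psi_{\min}$ (resp.\ $\psi_{\max}$) to $\mathcal{F}$ on a dense $*$-subalgebra, hence everywhere. For existence, one simply notes that $\psi_{\min}$ and $\psi_{\max}$ from Theorem~\ref{main2} are KMS states, so their restrictions to the gauge invariant subalgebra are tracial (a standard fact: a KMS state restricted to the centralizer of the modular automorphism group is a trace, and the gauge invariant subalgebra lies in that centralizer).

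The main obstacle I anticipate is the bookkeeping in the reduction step: showing that $\tau$ on the spanning elements $S_{u_1}gS_{u_2}^*$ is genuinely forced by its values on $G$, and not merely consistent with $\psi$. One has to be careful that when $|u_1| \neq |u_2|$ the element is not gauge invariant and so does not arise; and when $|u_1| = |u_2|$ but $u_1 \neq u_2$, the trace property combined with $S_{u_2}^* S_{u_1} = 0$ whenever $u_1, u_2$ are distinct words of equal length gives $\tau(S_{u_1}gS_{u_2}^*) = \tau(gS_{u_2}^*S_{u_1}) = 0$, which is consistent. When $u_1 = u_2 = u$, one needs $\tau(S_u g S_u^*) = \tau(g S_u^* S_u) = \tau(g)$ — but this is \emph{not} $|X|^{-|u|}\psi_0(g)$, so in fact the trace on $\mathcal{F}$ is \emph{not} the restriction of $\psi$; rather one must use the Cuntz relation $\sum_{|u|=n} S_u S_u^* = 1$ together with traciality to see $\sum_{|u|=n}\tau(S_u g S_u^*)$ relates correctly, and decompose a general element of $\mathcal{F}_n$ accordingly. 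The cleanest route is to invoke the conditional expectation $E\colon \mathcal{F} \to C^*(G)$-ish core and the standard fact that a trace on a Cuntz--Pimsner core is determined by its restriction to the coefficient algebra via the compatibility with the transfer operator; so I would phrase the argument as: any trace on $\mathcal{F}$ induces a $\vph$ satisfying~(\ref{pre}), Proposition~\ref{main} makes $\vph$ unique, and the core-algebra structure theorem then makes the extension to $\mathcal{F}$ unique. Verifying that the core of $\mO_{G_{\min}}$ and $\mO_{G_{\max}}$ has this structure (which follows since both are Cuntz--Pimsner algebras over the full/appropriate bimodule, as established earlier in the section) is the technical heart.
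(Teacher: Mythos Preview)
Your plan has a genuine gap in the very first step. You claim that the restriction $\vph = \tau|_G$ of a tracial state on the gauge-invariant subalgebra $\mathcal{F}$ satisfies the pre-KMS identity~(\ref{pre}), and you cite the Remark following the definition of pre-KMS functions. But that Remark uses the KMS condition on the \emph{full} algebra: the factor $|X|^{-1}$ appears precisely because moving $S_{v(g,x)}$ across a $\log|X|$-KMS state picks up $e^{-\beta}=|X|^{-1}$. For a trace on $\mathcal{F}$ this move is illegal, since $S_x$ and $S_x^*$ do not lie in $\mathcal{F}$. What the trace property actually gives (after multiplying by the projection $S_xS_x^*$ and cycling \emph{that}) is
\[
\tau(g) \;=\; \sum_{x:\,v(g,x)=x}\tau(S_x h(g,x)S_x^*) \;=\; |X|^{-1}\sum_{x:\,v(g,x)=x}\tau_1(h(g,x)),
\]
where $\tau_1(h):=|X|\,\tau(S_x h S_x^*)$ is another tracial functional on the coefficient algebra. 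There is no a priori reason that $\tau_1=\tau|_G$, so Proposition~\ref{main} does not apply directly. Your ``obstacle'' paragraph compounds the problem: the line $\tau(S_u g S_u^*)=\tau(gS_u^*S_u)=\tau(g)$ again cycles $S_u^*$ illegally, and your conclusion that ``the trace on $\mathcal{F}$ is not the restriction of $\psi$'' is simply wrong---that restriction \emph{is} the unique trace, and $\psi(S_u g S_u^*)=|X|^{-|u|}\psi_0(g)$ is precisely what must be shown.

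The paper's proof bypasses Proposition~\ref{main} altogether (as the sentence immediately following the theorem notes). It fixes $u_1$ and expands $g=\sum_{u\in X^m}S_{v(g,u)}h(g,u)S_u^*$ inside $S_{u_1}gS_{u_1}^*$. Since any trace $\vph$ agrees with $\psi$ on the matrix units $S_vS_v^*$ (by traciality and the Cuntz relation), the terms with $v(g,u)\neq u$ or $h(g,u)=e$ cancel in $(\vph-\psi)(S_{u_1}gS_{u_1}^*)$. For the remaining terms one uses only that $-1\le\mathrm{Re}\,h(g,u),\,\mathrm{Im}\,h(g,u)\le 1$ together with positivity of $\vph$ to get
\[
|\vph(S_{u_1}gS_{u_1}^*)-\psi(S_{u_1}gS_{u_1}^*)|\;\le\;3\,|X|^{-n-m}\bigl|\{u\in X^m:\,v(g,u)=u,\ h(g,u)\neq e\}\bigr|,
\]
and Lemma~\ref{gen3} sends the right-hand side to $3|X|^{-n}\mu(X^\omega\setminus\ggen)=0$ as $m\to\infty$. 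This is essentially the same limiting argument that underlies Proposition~\ref{main}, but applied directly to each corner $S_{u_1}(\cdot)S_{u_1}^*$ rather than to $\tau|_G$; that is what makes it work without ever needing $\tau|_G$ to be pre-KMS.
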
  
\begin{proof}
Take a tracial state $\vph$ on gauge invariant subalgebra of $\mO_{G_{\min}}$ or $\mO_{G_{\max}}$.
It is sufficient to prove $\vph(S_{u_1}gS_{u_2}^*) = |X|^{-n}\de_{u_1, u_2}\psi(g)$ for any $g \in G$ and $u_1, u_2 \in X^*$ with same length $n$.
Note that $\vph(S_{u_1}gS_{u_2})=0$ if $u_1 \neq u_2$ and $\vph(S_{u_1}S_{u_1}^*) = \psi(S_{u_1}S_{u_1}^*)$ for any $u_1 \in X^*$ and we get
\begin{equation}\label{eqo}
\begin{split}
\vph(S_{u_1}gS_{u_1}^*) - \psi(S_{u_1}gS_{u_1}^*)
&= \sum_{\{u \in X^m | v(g, u)=u, h(g, u) \neq e\} }(\vph - \psi)(S_{u_1}S_{v(g, u)}h(g, u)S_u^*S_{u_1}^*) \\
&=\sum_{\{u \in X^m | v(g, u)=u, h(g, u) \neq e\} }(\vph - \psi)(S_{u_1}S_{v(g, u)}\text{Re}( h(g, u))S_u^*S_{u_1}^*) \\
&+ i\sum_{\{u \in X^m | v(g, u)=u, h(g, u) \neq e\} }\vph(S_{u_1}S_{v(g, u)}\text{Im}(  h(g, u))S_u^*S_{u_1}^*)
\end{split}
\end{equation}
for any $n \in \N$ where $\text{Re}(h(g, u)):= (h(g, u) + h(g, u)^*)/2$ and $\text{Im}(h(g, u)):= (h(g, u) - h(g, u)^*)/2i$.
Since each $h(g, u)$ is a unitary,
two inequalities
$ -1 \leq  \text{Re}(h(g, u)) \leq 1$ and $ -1 \leq  \text{Im}(h(g, u)) \leq 1$ hold and therefore we have   
\begin{equation}\label{eqtw}
\begin{split}
-\vph(S_{u_1}S_{v(g, u)}S_u^*S_{u_1}^*) - \psi(S_{u_1}S_{v(g, u)}h(g, u)S_u^*S_{u_1}^*)  
&\leq (\vph - \psi)(S_{u_1}S_{v(g, u)}\text{Re}( h(g, u))S_u^*S_{u_1}^*) \\
&\leq \vph(S_{u_1}S_{v(g, u)}S_u^*S_{u_1}^*) - \psi(S_{u_1}S_{v(g, u)}h(g, u)S_u^*S_{u_1}^*)
\end{split}
\end{equation}
and
\begin{equation}\label{eqth}
%\begin{split}
-\vph(S_{u_1}S_{v(g, u)}S_u^*S_{u_1}^*)  
\leq \vph(S_{u_1}S_{v(g, u)}\text{Im}( h(g, u))S_u^*S_{u_1}^*) 
\leq \vph(S_{u_1}S_{v(g, u)}S_u^*S_{u_1}^*) 
%\end{split}
\end{equation}
for any $u \in X^m$ with $v(g, u) = u$ and $h(g, u) \neq e$.
Combining equations (\ref{eqo}),
(\ref{eqtw}) and (\ref{eqth}),
we get
\begin{equation*}
\begin{split}
|\vph(S_{u_1}gS_{u_1}^*) - \psi(S_{u_1}gS_{u_1}^*)| 
&\leq \sum_{\{u \in X^m | v(g, u)=u, h(g, u) \neq e\} } (\vph(S_{u_1}S_{v(g, u)}S_u^*S_{u_1}^*) + \psi(S_{u_1}S_{v(g, u)}h(g, u)S_u^*S_{u_1}^*) ) \\
&+ \sum_{\{u \in X^m | v(g, u)=u, h(g, u) \neq e\} }\vph(S_{u_1}S_{v(g, u)}S_u^*S_{u_1}^*) \\
& \leq 3|X|^{-n-m} |\{u \in X^m | v(g, u)=u, h(g, u) \neq e\}|.
\end{split}
\end{equation*}
By the assumption,
we have $\mu(X^\om \backslash \ggen ) =0$ so Lemma \ref{gen3} and the above inequality imply $\vph(S_{u_1}gS_{u_1}^*) = \psi(S_{u_1}gS_{u_1}^*)$. 
\end{proof} 

From the above theorem,
we can also show that the uniqueness part of Theorem \ref{main2} without using Proposition \ref{main}.
We also have the following theorem as a corollary of the above theorem.
However,
the following theorem was already proved in \cite{BL} from arguments on KMS states on Toeplitz type \AL associated to right LCM monoids.
 
\begin{thm}
If $\mu(\Ggen)=1$,
then $\psi$ extends to a unique KMS state $\psi_{\max}$ in $K_{\log|X|}(\mO_{G_{\max}})$.
\end{thm}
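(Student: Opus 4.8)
The plan is to bootstrap from the results already established for $\mO_{G_{\min}}$ and from the rigidity of pre-KMS functions. For existence, recall (from the universality of $\mO_{G_{\max}}$, or equivalently by regarding the representation $\pi_w$ for a strictly $G$-generic $w$ as a map onto $\mO_{G_{\min}}$) that there is a canonical surjective $*$-homomorphism $q\colon\mO_{G_{\max}}\to\mO_{G_{\min}}$ fixing every $S_x$ and every $g\in G$; since $\Ga$ is given by the same formula on both algebras, $q$ intertwines the two gauge actions, so $\psi_{\min}\circ q$ is a state in $K_{\log|X|}(\mO_{G_{\max}})$. On the spanning elements it takes the value $\psi_{\min}(S_{u_1}gS_{u_2}^*)=\de_{u_1,u_2}|X|^{-|u_1|}\psi_0(g)$, which is exactly the defining formula of $\psi$, so $\psi_{\min}\circ q$ is the desired extension $\psi_{\max}$ of $\psi$. (Alternatively, regarding $\mO_{G_{\max}}$ as a Cuntz--Pimsner algebra over $A=C^*(G)$ with orthonormal basis $\{S_x\}_{x\in X}$, Lemma \ref{fixed} applied to $(X^\om,\mu,G)$ shows that $\psi_0$ extends to a state $\bar\psi_0$ on $A$, which is tracial because $g\mapsto\mu(\gfix)$ is invariant under conjugation, and the pre-KMS identity (\ref{pre}) is precisely the hypothesis $\bar\psi_0(\sum_{x\in X}\langle S_x,aS_x\rangle)=|X|\,\bar\psi_0(a)$ of \cite[Lemma 3.2]{PW}, which yields the KMS state directly.)

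For uniqueness, let $\vph\in K_{\log|X|}(\mO_{G_{\max}})$ be arbitrary. A KMS state is $\Ga$-invariant, so $\vph(S_{u_1}gS_{u_2}^*)=0$ whenever $|u_1|\neq|u_2|$; when $|u_1|=|u_2|=n$, the KMS condition together with $S_{u_2}^*S_{u_1}=\de_{u_1,u_2}1$ gives $\vph(S_{u_1}gS_{u_2}^*)=\de_{u_1,u_2}|X|^{-n}\vph(g)$. Hence $\vph$ is determined on the dense $*$-subalgebra spanned by the normal-form elements $S_{u_1}gS_{u_2}^*$ by its restriction to $G$, which is a pre-KMS function by the remark following the definition of pre-KMS function. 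Since $\mu(\Ggen)=1$, Proposition \ref{main} forces $\vph|_G=\psi_0$, so $\vph=\psi_{\max}$. (Equivalently, one observes that $\vph=\vph\circ E$ for the canonical conditional expectation $E$ onto the gauge invariant subalgebra, and invokes the uniqueness of the tracial state on that subalgebra from Theorem \ref{trace}; this is the route indicated in the text.)

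The main obstacle is not really analytic: the positivity of $\psi$ (Lemma \ref{fixed}) and, crucially, the uniqueness of the pre-KMS function --- the only place where the hypothesis $\mu(\Ggen)=1$ enters (Proposition \ref{main}) --- are already in hand. What one must be careful about is the bookkeeping showing that gauge-invariance and the KMS condition reduce an arbitrary KMS state on $\mO_{G_{\max}}$ to a pre-KMS function on $G$: this requires the density of the span of the elements $S_{u_1}gS_{u_2}^*$ (the normal form for $\mO_{G_{\max}}$) and the way they transform under $\Ga$ and multiply. These facts are standard for gauge actions on Cuntz--Pimsner algebras, so the theorem is essentially a repackaging of Theorems \ref{main2} and \ref{trace} together with Proposition \ref{main}.
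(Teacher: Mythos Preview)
Your proposal is correct and matches the paper's approach: the paper does not give an explicit proof but states the theorem as a corollary of Theorem \ref{trace} (existence having already been noted right after Theorem \ref{main2} via the quotient $\mO_{G_{\max}}\to\mO_{G_{\min}}$), and you supply exactly these ingredients, with the pre-KMS--function route through Proposition \ref{main} as an equally valid alternative for uniqueness.
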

 
\section{von Neumann algebraic approaches}
In this section we consider GNS representations of KMS states which we have discussed at the previous section.
$(\pi_{\psi_{\max}}, H_{\psi_{\max}})$ and $(\pi_{\psi_{\min}}, H_{\psi_{\min}})$ denote GNS representations of $\psi_{\max}$ and $\psi_{\min}$ respectively.
Moreover let $\rho$ be the universal quotient map from $\mO_{G_{\max}}$ onto $\mO_{G_{\min}}$. 
If $\mu(\Ggen)=1$,
then $\psi_{\max} = \psi_{\min} \circ \rho$ and therefore 
$\pi_{\psi_{\max}}(\mO_{G_{\max}})^{''}$ is isomorphic to $\pi_{\psi_{\min}}(\mO_{G_{\min}})^{''}$.
We write simply $\mO_G^{''}$ for this von Neumann algebra.
Moreover, 
we consider only $\mO_{G_{\min}}$ and $(\pi_{\psi_{\min}}, H_{\psi_{\min}})$.
Hence we use a simpler symbol $\psi$ instead of $\psi_{\min}$.
From the simplicity of $\mO_{G_{\min}}$,
$\psi$ is a faithful KMS state and therefore it extends to a normal faithful state on $\mO_G^{''}$.
We also write $\psi$ for this extension. 
Moreover,
the state $\psi$ is a KMS state for the extension of $\Ga$ to $\mO_G^{''}$ (we also use $\Ga$ for the extension of $\Ga$).
The uniqueness of $\psi$ in Theorem \ref{main2} implies the factority of $\mO_G^{''}$.
We see that $\mO_G^{''}$ is an AFD III$_{|X|^{-1}}$ factor in some cases.

First we see that we can compute the type of $\mO_G^{''}$ as a corollary of the uniqueness of tracial states in Theorem \ref{trace}.

\begin{thm}\label{type}
If $\mu(\Ggen)=1$,
then $\mO_G^{''}$ is a type III$_{|X|^{-1}}$ factor.
\end{thm}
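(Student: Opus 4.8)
The plan is to recognise the modular structure carried by $\psi$ on $M:=\mO_G^{''}$ and then to run Connes' discrete decomposition for factors with periodic modular flow. Since $\psi\in K_{\log|X|}(\mO_{G_{\min}})$ is a KMS state for the gauge action, its modular automorphism group is the rescaled gauge flow $\si^\psi_t=\Ga_{-t\log|X|}$. As $\Ga$ is $2\pi$-periodic ($\Ga_s(S_x)=\exp(is)S_x$, $\Ga_s(g)=g$), the flow $\si^\psi$ is periodic, and its period is exactly $2\pi/\log|X|$: each $S_x$ is a nonzero element of $M$ because $\psi$ is faithful on the simple algebra $\mO_{G_{\min}}$, so $\Ga_s=\id$ forces $\exp(is)=1$.

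Next I would pin down the centralizer $M_\psi$. It is the fixed-point algebra $M^{\si^\psi}=M^{\Ga}$, and via the normal $\psi$-preserving conditional expectation $E(x)=\tfrac1{2\pi}\int_0^{2\pi}\Ga_s(x)\,ds$ onto it, $M_\psi$ is the weak closure of the gauge-invariant subalgebra of $\mO_{G_{\min}}$. Being the centralizer of a faithful normal state, $M_\psi$ is finite; Theorem \ref{trace} gives a unique tracial state on the gauge-invariant subalgebra, and a Kaplansky-density argument (normal states are $\sigma$-weakly continuous, and the subalgebra is $\sigma$-weakly dense) then shows $M_\psi$ has a unique normal tracial state, hence trivial center, i.e.\ $M_\psi$ is a finite factor. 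Two further facts about the position of $M_\psi$ in $M$ are worth recording, as they exhibit the discrete decomposition with modulus $|X|^{-1}$: first, $M$ is generated by $M_\psi$ together with the single isometry $S_1$ (since $S_x=(S_xS_1^*)S_1$ with $S_xS_1^*\in M_\psi$, and $G\subset M_\psi$); second, $p:=S_1S_1^*\in M_\psi$ satisfies $\psi(p)=|X|^{-1}$ and $a\mapsto S_1aS_1^*$ is an isomorphism of $M_\psi$ onto $pM_\psi p$ scaling the trace by $|X|^{-1}$, so $M_\psi$ is not of finite type I and is therefore a II$_1$ factor.

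To conclude I would compute $S(M)$. The Arveson spectrum of $\si^\psi$ is $(\log|X|)\Z$, because the gauge action has spectral subspaces $M^{(n)}=\overline{\mathrm{span}}\{S_{u_1}gS_{u_2}^*:|u_1|-|u_2|=n\}$, which are nonzero for every $n\in\Z$. Since the fixed-point algebra $M_\psi$ is a factor, the Connes spectrum $\Gamma(\si^\psi)$ coincides with the Arveson spectrum, so $\Gamma(\si^\psi)=(\log|X|)\Z$, whence $S(M)\cap\R_{>0}=|X|^{\Z}$; in particular $0\in S(M)$, so $M$ is of type III, and $S(M)=\{0\}\cup|X|^{\Z}$ is precisely the statement that $M$ is a type III$_{|X|^{-1}}$ factor. (Equivalently, one checks directly that $\Delta_\psi^{\,2\pi i/\log|X|}=1$, giving $\mathrm{Sp}(\Delta_\psi)\subseteq\{0\}\cup|X|^{\Z}$, while the vectors $S_u\Omega_\psi$ and $S_u^*\Omega_\psi$ realise every power $|X|^{\pm n}$ as an eigenvalue; combined with $M_\psi$ being a factor this forces $S(M)=\mathrm{Sp}(\Delta_\psi)=\{0\}\cup|X|^{\Z}$.)

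The only substantive ingredient is the structure theory — the identity $S(M)\cap\R_{>0}=\exp\Gamma(\si^\psi)$ together with the fact that a factor fixed-point algebra makes the Connes spectrum fill out the full Arveson spectrum (equivalently, Connes' discrete decomposition of type III$_\lambda$ factors). Everything else is bookkeeping; the most delicate routine point is transferring the C$^*$-algebraic uniqueness of Theorem \ref{trace} to a unique normal trace on the von Neumann completion $M_\psi$, which is what secures the factoriality of the centralizer and hence the whole argument.
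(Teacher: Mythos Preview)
Your argument is correct and follows essentially the same line as the paper: identify $\si^\psi_t$ with the rescaled gauge action $\Ga_{-t\log|X|}$, use Theorem~\ref{trace} to show the centralizer $M_\psi=(\mO_G'')^\Ga$ is a factor, and then invoke the equality of Connes and Arveson spectra (the paper cites \cite[16.1]{S}) to read off the type. The paper handles the passage from C$^*$-level trace uniqueness to factoriality of the von Neumann centralizer via a quasi-equivalence lemma of Izumi rather than your Kaplansky-density argument, and it omits your (correct but inessential) digression on the discrete decomposition and the II$_1$ structure of $M_\psi$, but the skeleton is identical.
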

  
\begin{proof}
Let $\mO_{G_{\min}}^\Ga$ be the gauge invariant subalgebra of $\mO_{G_{\min}}$.
Then the GNS representation of the restriction of $\psi$ to $\mO_{G_{\min}}^\Ga$ is quasi-equivalent to the restriction of GNS representation $(\pi_\psi, H_\psi)$ to $\mO_{G_{\min}}^\Ga$ by \cite[Lemma 4.1]{Iz}.
By Theorem \ref{trace},
the von Neumann algebra $\pi_{\psi}|_{\mO_{G_{\min}}^\Ga} (\mO_{G_{\min}}^\Ga)^{''}$ on $H_{\psi}|_{\mO_{G_{\min}}^\Ga}$ is a factor and so is $\pi_\psi(\mO_{G_{\min}}^\Ga)^{''}$.
Recalling the uniqueness of the one parameter automorphism group of $\R$ with a certain inverse temperature,
we see that the extension of $\Ga_{-t\log|X|}$ to $\mO_G^{''}$ coincides with the modular automorphism group of $\psi$. 
%Consider an action $\al$ of $\R$ on $\mO_{G_{\min}}$ given by $\al_t := \Ga_{-t\log|X|}$ for any $t \in \R$
%We can easily check that $\psi$ is a KMS state for $\al$ with the inverse temperature $-1$.
%Now we can extend $\al$ to an action $\tilde{\al}$ on $\mO_G^{''}$.
%The expansion $\psi$ of $\psi$ to $\mO_G^{''}$ is also a KMS state for $\tilde{\al}$ with the inverse temperature $-1$.
%Hence we conclude that $\tilde{\al}$ is the modular automorphism group of $\psi$ by the uniqueness of the automorphism which makes a faithful state satisfy the KMS condition with a certain inverse temperature.
We write $\si^{\psi}$ for the modular automorphism group.
Using the periodicity of $\Ga$,
we have the equation 
\[
(\mO_G^{''})^{\si^{\psi}} = (\mO_G^{''})^{\Ga} = \pi_\psi(\mO_{G_{\min}}^\Ga)^{''}
\]   
where $(\mO_G^{''})^{\si^{\psi}}$ is the invariant sublagebra of $\si^{\psi}$.
Thus $(\mO_G^{''})^{\si^{\psi}}$ is a factor and therefore the Connes spectrum of $\si^{\psi}$ coincides with the Arveson spectrum of $\si^{\psi}$ (see \cite[16.1]{S}).
So we can compute the type of $\mO_G^{''}$ using the fact $\si_t^{\psi}$ coincides with the extension of $\Ga_{-t\log|X|}$ and we conclude that $\mO_G^{''}$ is a type III$_{|X|^{-1}}$ factor.
\end{proof}

\begin{prop}\label{AFD}
Assume that $\mu(\Ggen)=1$ and $G$ is amenable.
Then $\mO_G^{''}$ is an AFD type III$_{|X|^{-1}}$ factor.
\end{prop}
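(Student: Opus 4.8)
The plan is to show that $\mO_G^{''}$ is AFD (hyperfinite) and then invoke Theorem \ref{type} together with the uniqueness of the AFD factor of type III$_{|X|^{-1}}$ (Connes--Haagerup) to conclude. Since Theorem \ref{type} already gives that $\mO_G^{''}$ is a type III$_{|X|^{-1}}$ factor, the only thing left to prove is hyperfiniteness, which by Connes' characterization is equivalent to injectivity (semidiscreteness) of $\mO_G^{''}$, or equivalently amenability of $\mO_G^{''}$ as a von Neumann algebra.

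The key observation is that $\mO_{G_{\min}} = \pi_w(\mO_{G_{\max}})$ is a quotient of $\mO_{G_{\max}}$, and $\mO_{G_{\max}}$ is isomorphic to the full groupoid C$^*$-algebra of the groupoid attached to the self-similar action (as recalled in the introduction). When $G$ is amenable, this groupoid is amenable, so the full and reduced groupoid C$^*$-algebras coincide and $\mO_{G_{\min}} \cong \mO_{G_{\max}}$ is a nuclear C$^*$-algebra. Hence the first step is: record that amenability of $G$ implies nuclearity of $\mO_{G_{\min}}$ (citing \cite{EP} or \cite{Nek2} for the groupoid amenability statement). The second step is the standard passage from nuclearity of the C$^*$-algebra to injectivity of the weak closure: for any state, in particular the KMS state $\psi$, the bicommutant $\pi_\psi(\mO_{G_{\min}})^{''} = \mO_G^{''}$ of a nuclear C$^*$-algebra in any of its GNS representations is an injective (hence, being a factor, AFD) von Neumann algebra. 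Combining injectivity with the fact from Theorem \ref{type} that $\mO_G^{''}$ is a type III$_{|X|^{-1}}$ factor, the uniqueness of the injective type III$_{|X|^{-1}}$ factor finishes the proof.

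In more detail, I would carry out the following steps in order. First, invoke the identification of $\mO_{G_{\max}}$ with the full groupoid C$^*$-algebra and the amenability of the groupoid (from amenability of $G$) to get $\mO_{G_{\min}} \cong \mO_{G_{\max}}$ nuclear; alternatively, since $\mO_{G_{\min}}$ is by construction a Cuntz--Pimsner algebra over the bimodule $\Phi$ with coefficient algebra $A_{G_{\min}}$, and $A_{G_{\min}}$ is a quotient of the (nuclear, when $G$ amenable) full group C$^*$-algebra $C^*(G)$, one can also argue nuclearity is preserved under the Cuntz--Pimsner construction over a finitely generated projective bimodule. Second, use that a C$^*$-algebra $A$ is nuclear iff $\pi(A)^{''}$ is injective for every representation $\pi$ (equivalently, $A$ is nuclear iff it is amenable, and amenability passes to the weak closure); apply this to $\pi = \pi_\psi$ to conclude $\mO_G^{''}$ is injective. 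Third, quote Theorem \ref{type} for the type and Connes' uniqueness theorem for the AFD type III$_{|X|^{-1}}$ factor.

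The main obstacle is the first step: one must be careful which Cuntz--Pimsner algebra one is working with and make sure nuclearity is actually available for $\mO_{G_{\min}}$ rather than only for $\mO_{G_{\max}}$ or the reduced groupoid algebra. The cleanest route is to note that amenability of $G$ forces the groupoid to be (topologically) amenable, so full and reduced groupoid algebras agree, all three of $\mO_{G_{\max}}$, $\mO_{G_{\min}}$, and the reduced groupoid algebra coincide, and this common algebra is nuclear. Once nuclearity of $\mO_{G_{\min}}$ is in hand, the remaining implications (nuclear $\Rightarrow$ weak closure injective $\Rightarrow$, being a type III$_{|X|^{-1}}$ factor, AFD) are standard and require no computation.
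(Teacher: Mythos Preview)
Your proposal is correct and follows essentially the same route as the paper: reduce to hyperfiniteness (Theorem \ref{type} handles the type), deduce nuclearity of the C$^*$-algebra from amenability of $G$, and pass to injectivity/AFD of the weak closure.

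One simplification compared to what you wrote: you do not need to argue that $\mO_{G_{\max}}$, $\mO_{G_{\min}}$, and the reduced groupoid algebra all coincide. As the introduction notes, even for amenable $G$ this coincidence is only established ``in some cases,'' so relying on it is delicate. The paper instead observes directly (citing \cite[Corollary 10.16]{EP}) that amenability of $G$ makes $\mO_{G_{\max}}$ nuclear, and then simply uses that $\mO_{G_{\min}}$ is a \emph{quotient} of $\mO_{G_{\max}}$ (a fact you already recorded), hence nuclear as well. From there the passage to AFD of $\mO_G^{''}=\pi_\psi(\mO_{G_{\min}})''$ is the standard implication (nuclear $\Rightarrow$ commutant semidiscrete $\Rightarrow$ bicommutant injective/AFD), for which the paper cites \cite[Corollary 3.8.6]{BO}. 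So your ``main obstacle'' dissolves once you use closure of nuclearity under quotients rather than trying to identify the three algebras.
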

\begin{proof}
We only show that $\mO_G^{''}$ is AFD.
If $G$ is amenable then $\mO_{G_{\max}}$ is nuclear by \cite[Corollary 10.16]{EP} and so is $\mO_{G_{\min}}$ which is a quotient of $\mO_{G_{\max}}$.
Now $\pi_\psi$ gives a representation and therefore $\pi_\psi(\mO_{G_{\min}})'$ is semidiscrete.
Thus,
$\mO_G^{''}$ is AFD by \cite[Corollary 3.8.6]{BO}.  
\end{proof}

%By the above proposition,
%we have completely concluded the structure of associated von Neumann algebras in case $\mu(\Ggen)=1$ and $G$ is amenable.
In some cases,
we also show that $\mO_G^{''}$ is an AFD III$_{|X|^{-1}}$ without assuming the amenability.  
Write $Y_g^n := \{ w \in X^\om \ | \ h(g, w^{(n)})=e \}$ for any $g \in G$ and $n \in \N$.
Then $Y_g^n$ is a increasing sequence of open sets and hence $\mu(\bigcup_{n \in \N}Y_g ^n)$ converges.
If it converges to $1$,
we can compute the type of $\mO_G^{''}$. 

\begin{thm}\label{main3}
If $\mu(\bigcup_{n \in \N}Y_g ^n) =1$ for any $g \in G$, 
then $\mO_G^{''}$ is an AFD type III$_{|X|^{-1}}$factor.
\end{thm}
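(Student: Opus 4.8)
The plan is to show that, under the hypothesis, the group part of $\mO_{G_{\min}}$ is absorbed by the Cuntz subalgebra $\mO_{|X|}$ in the GNS representation of $\psi$, so that $\mO_G^{''}=\pi_\psi(\mO_{|X|})^{''}$; the conclusion then follows from the classical description of the Cuntz algebra with its gauge-KMS state, and no amenability of $G$ is needed. As a preliminary step I would record the remark announced in the introduction. If $w\in Y_g^n$, that is $h(g,w^{(n)})=e$, then $g$ sends $w^{(n)}w'$ to $v(g,w^{(n)})w'$ for every $w'$; if $v(g,w^{(n)})=w^{(n)}$ this means $g$ fixes the whole cylinder $w^{(n)}X^\om$, an open neighbourhood of $w$, and if $v(g,w^{(n)})\neq w^{(n)}$ then $g(w)\neq w$. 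In either case $w\in\ggen$, so $\bigcup_nY_g^n\subseteq\ggen$, whence $\mu(\ggen)=1$ for every $g$ and $\mu(\Ggen)=1$ by Lemma~\ref{gen}. Thus $\psi$ is defined, $\mO_G^{''}$ is a type III$_{|X|^{-1}}$ factor by Theorem~\ref{type}, and $\psi$ is normal and faithful on $\mO_G^{''}$.

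The heart of the proof is the equality $\mO_G^{''}=\pi_\psi(\mO_{|X|})^{''}$. Fix $g\in G$, set $X_g^n:=\{u\in X^n\mid h(g,u)=e\}$, and for $n\in\N$ put $p_n:=\sum_{u\in X_g^n}S_uS_u^*$ and $a_n:=\sum_{u\in X_g^n}S_{v(g,u)}S_u^*$, both lying in $\mO_{|X|}$. Since $u\mapsto v(g,u)$ is injective on $X^n$, the summands defining $a_n$ have pairwise orthogonal source and range projections, so $a_n$ is a partial isometry with $a_n^*a_n=p_n$, and $gp_n=a_n$, hence $g-a_n=g(1-p_n)=:r_n$ with $\|r_n\|\le1$. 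Because the sets $Y_g^n$ increase, so do the projections $p_n$, and $\psi(p_n)=|X|^{-n}|X_g^n|=\mu(Y_g^n)\to\mu(\bigcup_nY_g^n)=1$; normality and faithfulness of $\psi$ on $\mO_G^{''}$ then force $p_n\nearrow1$ strongly. Therefore $\pi_\psi(r_n)=\pi_\psi(g)\pi_\psi(1-p_n)\to0$ strongly, so $\pi_\psi(a_n)\to\pi_\psi(g)$ strongly with all $a_n\in\mO_{|X|}$, giving $\pi_\psi(g)\in\pi_\psi(\mO_{|X|})^{''}$. As $g$ was arbitrary and $\mO_{G_{\min}}$ is generated by $G$ and $\{S_x\}_{x\in X}$, this yields $\mO_G^{''}=\pi_\psi(\mO_{|X|})^{''}$. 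By Kaplansky density the GNS cyclic vector is then cyclic for $\pi_\psi(\mO_{|X|})$, so $\pi_\psi|_{\mO_{|X|}}$ is the GNS representation of $\psi|_{\mO_{|X|}}$, and the latter is nothing but the (unique) $\Ga$-KMS state $S_uS_v^*\mapsto\de_{u,v}|X|^{-|u|}$ on the Cuntz algebra $\mO_{|X|}$ at inverse temperature $\log|X|$.

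To conclude I would invoke the classical fact that the GNS von Neumann algebra of this gauge-KMS state on $\mO_{|X|}$ is the Powers factor $R_{|X|^{-1}}$, the AFD type III$_{|X|^{-1}}$ factor; together with the previous paragraph this finishes the proof. I expect the equality $\mO_G^{''}=\pi_\psi(\mO_{|X|})^{''}$ to be the main obstacle: one must recognise that $\mu(\bigcup_nY_g^n)=1$ is precisely what allows each $\pi_\psi(g)$ to be approximated strongly by elements of $\mO_{|X|}$, the monotonicity of $Y_g^n$ and the faithfulness of $\psi$ on the von Neumann algebra being exactly the ingredients that make this strong limit exist. Once this identity is available, AFD-ness and the type are read off from the Cuntz-algebra side, with no use of amenability (the factoriality and type could alternatively be cited directly from Theorem~\ref{type}).
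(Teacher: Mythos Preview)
Your proposal is correct and follows essentially the same route as the paper: both prove $\mu(\Ggen)=1$ from the hypothesis, then approximate each $\pi_\psi(g)$ strongly by the very same elements $a_n=\sum_{h(g,u)=e}S_{v(g,u)}S_u^*\in\mO_{|X|}$, deduce $\mO_G^{''}=\pi_\psi(\mO_{|X|})^{''}$, and finish by identifying the latter with the GNS von Neumann algebra of the gauge-KMS state on $\mO_{|X|}$, which is known to be the AFD III$_{|X|^{-1}}$ factor. The only differences are in execution: the paper establishes strong convergence by computing $\psi((g-a_n)^*(g-a_n))$ and then propagating via the KMS condition to a dense set of vectors, whereas you observe more cleanly that $g-a_n=g(1-p_n)$ with $p_n\nearrow1$ strongly by normality and faithfulness of $\psi$; and for the identification with the Cuntz GNS picture the paper quotes a quasi-equivalence lemma from \cite{Iz}, while your cyclicity argument gives unitary equivalence directly. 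Both variants are valid and lead to the same conclusion.
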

\begin{proof}
Take an arbitrary $g \in G$.
By definition,
we have $(\bigcup_{n \in \N}Y_g ^n) \cap \gfix = \ggen \cap \gfix$ and therefore 
the assumption implies
\[
\mu(\gfix) = \mu((\bigcup_{n \in \N}Y_g ^n) \cap \gfix) = \mu(\ggen \cap \gfix).
\]
Hence $\mu(\Ggen)=1$ by Lemma \ref{gen}.
We show that $G$ is contained in the strong operator closure of $\text{span} \{S_{u_1}S_{u_2}^* \ | \ u_1, u_2 \in X^* \} $.
Take $g \in G$.
Then 
\[
g = \sum_{u \in X^n} S_{v(g, u)}h(g, u)S_u^*
=\sum_{\{u \in X^n| v(g, u) =e\} }S_{v(g, u)}S_u^* + \sum_{ \{u \in X^n | v(g, u) \neq e \} } S_{v(g, u)}h(g, u)S_u^*.
\]
Consider the following sequence 
\[
a_n := \sum_{\{u \in X^n| v(g, u) =e\} }S_v{(g, u)}S_u^*  \in \text{span} \{S_{u_1}S_{u_2}^* \ | \ u_1, u_2 \in X^* \}.
\]
Note that $u \mapsto v(g, u) $ is a bijective map from $X^n$ onto $X^n$ for any $g \in G$ since there exists an inverse map from $g^{-1}$.
Then we compute
\begin{equation*}
\begin{split}
\psi((g - a_n)^*(g - a_n)) &= \sum_{\{u \in X^n| v(g, u)  \neq e\} }\psi(S_uS_u^*) \\
&= |X|^{-n} |\{u \in X^n \ | \ v(g, u)  \neq e\} |
\to 0  \ (n \to \infty).
\end{split}
\end{equation*}
Note that $\{g - a_n \}_{n \in \N}$ is a norm bounded sequence since range projections of $S_{v(g, u)}$'s are mutually orthogonal.
For any $u_1, u_2 \in X^*$ and $g_1 \in G$,
we have 
\begin{equation*}
\begin{split}
\psi( ((g - a_n)S_{u_1}g_1S_{u_2}^*)^*((g - a_n)S_{u_1}g_1S_{u_2}^*) ) 
&= |X|^{|u_2| - |u_1|} \psi((g - a_n)S_{u_1}S_{u_1}^*(g - a_n)^*) \\
&\leq |X|^{|u_2| - |u_1|} \psi((g - a_n)^*(g - a_n))
\end{split}
\end{equation*}     
by the KMS condition.
Thus for any $a \in \text{span} \{S_{u_1}g'S_{u_2}^* \ | \ u_1, u_2 \in X^* \ \text{and} \ g \in G\}$,
\[
\psi( ((g - a_n)a)^* (g - a_n)a) \to 0 \ (n \to \infty).
\] 
Consequently,
$\{a_n\}_{n \in N}$ converges to $g$ in the strong operator topology thanks to norm boundedness of $\{g - a_n \}_{n \in \N}$ and density of $\text{span} \{S_{u_1}g'S_{u_2}^* \ | \ u_1, u_2 \in X^* \}$ in $H_\psi$.
Thus, 
$G \subset \overline{\text{span} \{S_{u_1}S_{u_2}^* \ | \ u_1, u_2 \in X^* \}}$ 
where the closure is the strong operator topological one,
and therefore 
\[
\mO_G^{''} = \overline{\text{span} \{S_{u_1}S_{u_2}^* \ | \ u_1, u_2 \in X^* \}} = \pi_\psi(\mO_{|X|})^{''}.
\]
Now $(\pi_{\psi}|_{\mO_{|X|}}, H_\psi )$ and $(\pi_{\psi|_{\mO_{|X|}}}, H_{\psi|_{\mO_{|X|}}})$ are quasi-equivalent by \cite[Lemma 4.1]{Iz}.
In \cite{Iz},
it is also proved that $\pi_{\psi}|_{\mO_{|X|}}(\mO_{|X|})^{''}$ is an AFD type III$_{|X|^{-1}}$factor.
Hence we finished the proof.
\end{proof}

In the next proposition,
we see that a large class of contracting self-similar group actions satisfies the assumption of Theorem \ref{main3}.

\begin{prop}
Assume that a self-similar group action of $G$ on $X^\om$ is contracting with the nucleus $\mN$.
If for any $g \in \mN$ there exists $u \in X^*$ such that $h(g, u)=e$,
then $\mu(\bigcup_{n \in \N}Y_g ^n) =1$ for any $g \in G$.
\end{prop}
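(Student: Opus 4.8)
The plan is to prove that for each fixed $g\in G$ the complement
$X^\om\backslash\bigcup_{n\in\N}Y_g^n=\{w\in X^\om\ |\ h(g,w^{(n)})\neq e\text{ for all }n\in\N\}$
is $\mu$-null, by the same geometric‑decay counting used in the proof of Proposition \ref{cont}. First I would fix $g$ and choose a block length $m\in\N$ large enough that two conditions hold: (i) $h(g',v)\in\mN$ for every $g'\in\mN\cup\{g\}$ and every $v\in X^*$ with $|v|\geq m$ (possible since $\mN$ is finite and the action is contracting); and (ii) $m\geq|u_{g'}|$ for every $g'\in\mN\backslash\{e\}$, where $u_{g'}\in X^*$ is a word with $h(g',u_{g'})=e$ supplied by the hypothesis. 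Since $h(g',u_{g'}v')=h(h(g',u_{g'}),v')=h(e,v')=e$ for any $v'\in X^*$ (cocycle identity, a consequence of (\ref{selfsim})), after padding I may assume each $u_{g'}$ has length exactly $m$; hence, for every $g'\in\mN$, the set $\{v\in X^m\ |\ h(g',v)\neq e\}$ has at most $|X|^m-1$ elements, and is empty when $g'=e$.

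Next I would split an infinite word as $w=v_1v_2v_3\cdots$ with $|v_j|=m$ for all $j$ and set $g_j:=h(g,w^{(jm)})$, so that $g_{j+1}=h(g_j,v_{j+1})$ and $g_j=e$ forces $g_{j'}=e$ for all $j'\geq j$. Put $E_K:=\{w\in X^\om\ |\ g_j(w)\neq e\text{ for }j=1,\dots,K\}$. An induction using (i) shows that on $E_K$ one has $g_1,\dots,g_K\in\mN\backslash\{e\}$, and clearly $X^\om\backslash\bigcup_{n\in\N}Y_g^n\subseteq E_K$ for every $K$. Now count the length‑$Km$ prefixes defining $E_K$: there are at most $|X|^m$ choices of $v_1$ with $g_1\neq e$, and for each admissible history $(v_1,\dots,v_j)$ (so $g_j\in\mN\backslash\{e\}$) there are at most $|X|^m-1$ extensions $v_{j+1}$ with $g_{j+1}\neq e$. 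Therefore $\mu(E_K)\leq|X|^{-Km}\cdot|X|^m(|X|^m-1)^{K-1}=(1-|X|^{-m})^{K-1}\to 0$ as $K\to\infty$, which forces $\mu(X^\om\backslash\bigcup_{n\in\N}Y_g^n)=0$, i.e. $\mu(\bigcup_{n\in\N}Y_g^n)=1$. Since $g$ was arbitrary, this proves the proposition.

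The only delicate points are bookkeeping. The first is ensuring $g_j$ stays inside $\mN$ at every step so that a witness word $u_{g_j}$ is available; this is precisely why $m$ must be taken beyond all of the finitely many contractivity thresholds attached to elements of $\mN$, not merely the threshold for $g$ itself. The second is phrasing the count as a \emph{uniform} bound on the number of admissible one‑block extensions of any admissible history, which is what collapses the estimate into a clean geometric series; this mirrors the inductive inequality (\ref{ind}) in the proof of Proposition \ref{cont}. Everything else is routine.
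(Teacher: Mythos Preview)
Your proposal is correct and follows essentially the same geometric-decay counting argument as the paper's proof. The only packaging difference is that the paper first reduces to the case $g\in\mN$ (relying implicitly on the nucleus being closed under restrictions, so the estimate $(|X|^m-1)^n$ holds from the first block onward), whereas you treat a general $g$ directly by enlarging the finite set to $\mN\cup\{g\}$ and paying one factor of $|X|^m$ at the first step; both lead to the same limit.
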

\begin{proof}
It is sufficient to show $\mu(\bigcup_{n \in \N}Y_g ^n) =1$ for any $g \in \mN$.
We use a similar argument to Proposition \ref{cont}.
By assumption,
there exists $m \in \N$ such that for any $g \in G$ there exist $u \in X^m$ with $h(g, u)=e$.
Take any $g \in \mN$.
We show that 
\begin{equation}
\label{ind2}
|\{ u \in X^{mn} \ | \ h(g, u) \neq e \}| \leq (|X|^m -1)^n
\end{equation}
by an inductive argument.
From the choice of $m$,
it is trivial that (\ref{ind2}) holds in case $n = 1$.
For the inductive step,
we assume that (\ref{ind2}) holds for some $n$.
For $u \in X^{m(n+1)}$,
we have a division $u=u_1u_2$ where $u_1 \in X^{mn}$ and $u_2 \in X^m$.
Note that $h(g, u_1) \in \mN$ for any $u_1 \in X^{mn}$ and therefore 
the choice of $m$ implies
\[
|\{ u_2 \in X^m \ | \  h(g, u_1u_2) \neq e\}| \leq |X|^m - 1.
\]
Hence we get
\[
|\{ u \in X^{m(n+1)} \ | \ h(g, u) \neq e \}|
\leq
|\{ u_1 \in X^{mn} \ | \ h(g, u_1) \neq e \}| (|X|^m - 1 ).
\]   
By the inductive assumption,
(\ref{ind2}) holds for $n+1$ and hence we have proved \ref{ind2}.
Thus we get
\[
\mu(X^\om \backslash \ (\bigcup_{n \in \N}Y_g ^n))
= \lim_{n \to \infty}|X|^{-mn}|\{ u \in X^{mn} \ | \ h(g, u) \neq e \}| \leq \lim_{n \to \infty} (1-|X|^{-m} )^n
=0
\]
and therefore we have finished the proof.
\end{proof} 

For the last of this paper,
we compute the type of the von Neumann algebra from the Grigorchuk group.
For the definition of the Grigorchuk group,
see Example \ref{Gri}. 
\begin{exa}
It is known that the Grigorchuk group is contracting and its nucleus is $\{e, b, c, d\}$
(see \cite[Proposition 2.7]{LR}).
We can easily see that the assumption in the above proposition holds by the definition of the Grigorchuk group.
Thus we can conclude that the von Neumann algebra associated with the Grigorchuk group is an AFD factor of type III$_{\frac{1}{2}}$. 
Using the fact that the Grigorchuk group is amenable,
we get the same result. 
\end{exa}

\end{document}